        \newtheorem{lemma}{Lemma}[section]
        \newtheorem{proposition}[lemma]{Proposition}
        \newtheorem{theorem}[lemma]{Theorem}
        \newtheorem{definition}{Definition}[section]
        \newtheorem{remark}[lemma]{Remark}
\title{\bf{Lipschitz stability for the electrostatic inverse boundary value problem with piecewise linear conductivities}}
\author{Giovanni Alessandrini\thanks{Dipartimento di Matematica e Geoscienze, Universit\`{a} di Trieste, Italy. Email:alessang@units.it}\qquad{Maarten V. de Hoop\thanks{Departments of Computational and Applied Mathematics, Earth Science, Rice University, Houston, Texas, USA. Email:mdehoop@rice.edu}}\qquad\\
Romina Gaburro\thanks{Department of Mathematics and Statistics, University of Limerick, Ireland.  Email: romina.gaburro@ul.ie}\qquad
Eva Sincich\thanks{Dipartimento di Matematica e Geoscienze, Universit\`{a} di Trieste, Italy. Email:esincich@units.it}}
\date{\today}
\begin{document}
\maketitle

\begin{abstract}
We consider the electrostatic inverse boundary value problem also known as electrical impedance tomography (EIT) for the case where the
conductivity is a piecewise linear function on a domain
$\Omega\subset\mathbb{R}^n$ and we show that a Lipschitz stability
estimate for the conductivity in terms of the local
Dirichlet-to-Neumann map holds true.
\end{abstract}

\section{Introduction}\label{sec1}

We consider the inverse boundary value problem (IBVP) associated with
the elliptic equation for an electric potential, where the objective
is to recover electrical resistivity, or conductivity, from partial
data.  We focus our attention on the stability of this inverse
problem, in particular, when the conductivity is isotropic. We obtain
a Lipschitz stability result if the conductivity is known to be
piecewise linear on a given domain partition.

We let $\Omega$ be a bounded domain in $\mathbb{R}^n$, $n \geq 2$. In
the absence of internal sources, the electric potential, $u$,
satisfies the elliptic equation
\begin{equation}\label{eq conduttivita'}
   \operatorname{div}(\gamma\nabla{u}) = 0\qquad
\text{in}\quad\Omega ,
\end{equation}
where the function $\gamma$ signifies the \textit{conductivity} in
$\Omega$; $\gamma$ is a bounded measurable function satisfying the
ellipticity condition,
\begin{equation}\label{ellipticity}
   0 < \lambda^{-1} \leq \gamma \leq \lambda ,\qquad
\textnormal{almost\:everywhere\:in}\:\Omega ,
\end{equation}
for some positive $\lambda \in \mathbb{R}$. The inverse conductivity
problem consists of finding $\gamma$ when the so-called
Dirichlet-to-Neumann (DtoN) map
\begin{equation}\label{DtoN}
   \Lambda_{\gamma} :\ u\vert_{\partial\Omega}
         \in H^{\frac{1}{2}}(\partial\Omega)
   \longrightarrow \gamma \nabla{u} \cdot
   \nu\vert_{\partial\Omega} \in H^{-\frac{1}{2}}(\partial\Omega)
\end{equation}
is given for any weak solution $u \in H^1(\Omega)$ to \eqref{eq
  conduttivita'}. Here, $\nu$ denotes the unit outward normal to
$\partial\Omega$. If measurements can be taken on a portion $\Sigma$
of $\partial\Omega$ only, then the relevant map is referred to as the
local DtoN map.

This inverse problem has different appearances, namely, as electrical
impedance tomography (EIT) and direct current (DC) method or
electrical resistivity tomography (ERT) in geophysics (belonging to
the class of potential field methods). Although the mathematical
framework of this paper is the one described by \eqref{eq
  conduttivita'}-\eqref{DtoN}, the application we have in mind is the
determination of the resistivity $\rho = \gamma^{-1}$ in DC or ERT
methods corresponding with the following type of experiment or
``sounding'': a current is injected into the ground through a pair of
electrodes at the boundary while the voltage is measured with another
pair of electrodes. Thus the data, viewed as an operator, can be
identified with the so-called Neumann-to-Dirichlet (NtoD) map. We note
that in the mathematical literature the use of the DtoN map as the
data is more common. The DtoN map is invertible on its range. Indeed
the applied boundary current fluxes must have a vanishing average. We
note that the solution is defined up to an additive (grounding
potential) constant. Whereas it is well known that, at a theoretical
level, the knowledge of either of the two maps is equivalent, matters
may be more complicated when, in different applications, the physical
settings provide different discrete and noisy samples of such
maps. The NtoD map, upon applying it to a particular subset of
currents, provides the so-called \textit{apparent resistivity}. To be
precise, the apparent resistivity is a geometrical (acquisition)
factor multiplied by the ratio of voltage (potential difference) over
current.

The first mathematical formulation of this inverse problem is due to
Calder\'{o}n in the context of EIT in \cite{C}, where he addressed the
problem of whether it is possible to determine the (isotropic)
conductivity from the DtoN map. To be precise, Calder\'{o}n investigated the injectivity of the map

\[Q:\gamma\longrightarrow Q_{\gamma},\]

where $Q_{\gamma}(\phi)$ is the quadratic form associated to $\Lambda_{\gamma}$, by linearizing the problem.
As main contributions in this respect we mention the papers by Kohn and
Vogelius \cite{Koh-V1,Koh-V2}, Sylvester and Uhlmann \cite{Sy-U}, and
Nachman \cite{N}. We wish to recall the uniqueness results of Druskin
who, independently from Calder\'{o}n, dealt directly with the
geophysical setting of the problem in \cite{D1}-\cite{D3}. We also
refer to \cite{Bo}, \cite{Che-I-N} and \cite{U} for an overview of
recent developments regarding the issues of uniqueness and
reconstruction of the conductivity.

It is well known that the IBVP of determining the conductivity
$\gamma$ from the DtoN map is ill-posed. Indeed, regarding the
stability of this inverse problem, Alessandrini \cite{A} proved that,
assuming $n \geq 3$ and \textit{a-priori} bounds on $\gamma$ of the
form
\begin{equation}\label{regularity bounds on sigma}
   || \gamma ||_{H^s(\Omega)} \leq E ,
\quad\textnormal{for\:some}\:s>\frac{n}{2}+2 ,
\end{equation}
$\gamma$ depends continuously on $\Lambda_{\gamma}$ with a modulus of
continuity of logarithmic type. We also refer to \cite{A1}, \cite{A2},
which improve the result in \cite{A} for conductivities $\gamma \in
W^{2,\infty}(\Omega)$. We refer to \cite{B-B-R}, \cite{B-F-R} and
\cite{Liu} for the two-dimensional case, where logarithmic type
stability estimates have been established too. The common logarithmic
type of stability cannot be avoided \cite{A3, Ma}. However, the ill-posed
nature of this problem can be modified to be conditionally well-posed
by restricting the conductivity to certain function
subspaces. Well-posedness is here expressed by Lipschitz stability.

A first result of this kind was established by Alessandrini and
Vessella \cite{A-V}, to which we refer, together with \cite{A3}, for
an in-depth description and analysis. The result of \cite{A-V} was
extended to different types of problems, for example, in
\cite{Be-dH-Q}, \cite{Be-dH-Q-S} for the Schr\"odinger and the
Helmholtz equations, respectively, in \cite{Be-Fr} for the inverse
conductivity problem with complex conductivity, and in \cite{Be-Fr-V},
\cite{Be-Fr-Mo-Ro-Ve} for the determination of the Lam\'e parameters
in the elastostatic problem. All of these papers have in common the
stable determination of coefficients that are piecewise constant on a
given, that is, fixed domain partition. This partition needs to
satisfy certain geometric conditions. One can view the domain
partition as prior information which needs to be obtained by other
inverse methods.

Let us emphasize that the main effort in these papers resides in
achieving a constructive evaluation of the Lipschitz constant. This
goal requires the construction and evaluation of ad hoc singular
solutions and the use of quantitative estimates of unique
continuation, and both steps may be somewhat sophisticated due to the
presence of jumps in the coefficients. These are common themes of the
above mentioned papers, each of which however presents its own
specificity and difficulty. Also some variations from the route first
outlined in \cite{A-V} have been followed, let us mention for instance \cite{Be-Fr-V},
\cite{Be-dH-Q-S}, by taking advantage
of a general functional analytic framework developed in \cite{Bac-V}.

We wish to recall, here, that the uniqueness result
obtained by Druskin \cite{D2} was in the context of piecewise constant
conductivities too. In the present paper, we consider conductivities
that are piecewise linear instead. We note that we can adapt our
analysis to the case of piecewise linear resistivities.

In dimension $n \geq 3$ - which we consider in
geophysics - uniqueness has been established by Haberman and Tataru
for conductivities in $C^1$ \cite{Ha-T} and more recently for
Lipschitz conductivities in \cite{Ca-R}, both assuming full boundary
data. The original uniqueness result by Sylvester and Uhlmann
\cite{Sy-U} required the conductivity to be $C^{\infty}$. For the
two-dimensional case we refer to \cite{Bro-U} and the breakthrough
paper \cite{As-P} where uniqueness has been proven for conductivities
that are merely $L^{\infty}$.

The class of conductivities considered in this paper consists of
piecewise linear functions on a given domain partition, which are
possibly discontinuous at the interfaces of this partition. The
Lipschitz stability estimate we provide requires a direct proof.
(Indeed, the uniqueness result of \cite{Ca-R} ($n\ge 3$) does not apply;
in fact, in the case of partial data, the result of \cite{As-P} does
not apply either). This estimate is given in terms of the local DtoN
map. With a slight modification, our arguments can apply when the
local NtoD map is available instead, see for instance the discussion in \cite{A-G1}.

With a Lipschitz stability estimate at hand, we can apply certain
iterative methods for reconstruction within a subspace of piecewise
linear functions with a starting model at a distance less than the
radius of convergence to the unique solution \cite{dH-Q-S},
\cite{L-S}. This radius is roughly inversely
proportional to the stability constant appearing in the estimate. More
importantly, we can iteratively construct the best piecewise linear
approximation for a given domain partition. Since the stability
constant will grow at least exponentially with the number of
subdomains in the partition \cite{R}, the radius of
convergence shrinks accordingly. One can expect accurate piecewise
linear approximations with relatively few subdomains to describe the
subsurface, noting that the domain partition need not be uniform and
may show a local refinement, and hence our result provides the
necessary insight for developing a practical approach with relatively
minor prior information. Whether we can recover, also, an unknown
domain partition (such as one of tetrahedral type) is a current
subject of research.

As we mentioned earlier, the application we have in mind here is the
DC acquisition and method, which were introduced by Schlumberger in
1920 \cite{S}. Initial DC deep resistivity studies of Earth's crust
were carried out as early as in 1932 \cite{SS}. Many studies have
followed. We mention, in particular, the experiments and results by
Constable, McElhinny and McFadden \cite{Co-McE-McF} carried out in
central Australia in 1984. For a general description and the history
of the DC method (and the closely related induced electrical
polarization (IP) method) we refer to the textbooks by Koefoed
\cite{K2}, Zhdanov and Keller \cite{ZK}, and Kaufman and Anderson
\cite{Ko-An}; for a concise tutorial and review, see Ward
\cite{W}. For a finite-element method and solver for and computational
studies of the DC method, see Li and Spitzer \cite{L-S}. Here, we
consider isotropic conductivities (and therefore resistivities);
however, Earth's materials can certainly be anisotropic, which was
already recognized by Mallet and Doll \cite{M-D}. We refer to
\cite{A-G}, \cite{A-G1}, \cite{As-L-P}, \cite{B}, \cite{F-K-R},
\cite{G-L}, \cite{G-S}, \cite{L} and \cite{La-U}) for results
concerning the anisotropic case.

Through recent decades, electromagnetic methods have been widely used
in geothermal prospecting \cite{Br-Ma-V-F-Mo-E}. Amongst different
geophysical exploration methods, in geothermal prospecting,
resistivity methods have been demonstrated to be the most
effective. The reason is that the electrical resistivity of rocks is
controlled by important geothermal parameters including temperature,
fluid type and salinity, porosity, permeable pathways, fracture zones
and faults (structural), the composition of the rocks, and the
presence of alteration minerals. In this context, we mention the work
of Hersir, Bj\"{o}rnsson and Eysteinsson \cite{He-Bj-E} and, more
recently, of Fl\'{o}venz \textit{et al.} \cite{Fl}.

We briefly mention how the acquisition -- essentially probing the NtoD
map -- is carried out (see, for example, \cite{Ba1}, \cite{Ba2}). The
original acquisition was designed for two-dimensional configurations
($n=2$). The Schlumberger array consists of four collinear
electrodes. The outer two electrodes are current (boundary source)
electrodes and the inner two electrodes are the potential (receiver)
electrodes. The potential electrodes are installed at the center of
the electrode array with a small separation. The current electrodes
are gradually increased to a greater separation during the survey --
while the potential electrodes remain in the same position until the
observed voltage becomes too small to measure -- for the current to
probe deeper into the earth. Indeed, the depth resolution of the DC
method is sensitive to the separation between current electrodes
\cite{O-L}. There is also the (crossed) square-array acquisition which
is designed to be more sensitive to anisotropy than the Schlumberger
array \cite{H-W}, \cite{H}.

There are different types of electrode configuration that are commonly
used. In two-dimensional configurations, the dipole-dipole array is
widely being used because of the low electromagnetic coupling between
the current and potential circuits. In three-dimensional
configurations, the pole-pole electrode configuration is commonly
used. (In practice, the ideal pole-pole array, with only one current
and one potential electrode does not exist. To approximate the
pole-pole array, the second current and potential electrodes must be
placed at a large distance). For convenience the electrodes are
arranged in a square grid with the same unit electrode spacing in
orthogonal (coordinate) directions. (We mention the E-SCAN method
\cite{L-O}, \cite{E-O}). It can be very time-consuming to make such a
large number of measurements. To reduce the number of measurements
required without seriously degrading the resolution, ``cross-diagonal
survey'' method was introduced; here, the potential measurements are
only made at the electrodes along two orthogonal directions and the 45
degrees diagonal lines passing through the current electrode
(extracted from Loke's tutorial: 2-D and 3-D electrical imaging surveys, \cite{Lo}).

The inverse problem pertaining to resistivity interpretation was
reported as early as the 1930s (e.g. Slichter, 1933; Stevenson, 1934;
Ejen, 1938; Pekeris, 1940 \cite{Pe}). Slichter \cite{Sl}
published a method of interpretation of resistivity data over a
planarly layered earth using Hankel's Fourier-Bessel inversion
formula. It gives a unique solution if the resistivity is a continuous
function of electrode spacings. A substantial number of papers have
been written on approaches based on partial boundary data ``fitting''
or optimization to estimate the resistivity, without knowledge of
uniqueness or convergence. Narayan, Dusseault and Nobes \cite{Na-D-No}
give an extensive overview. In the context of data fitting, Parker
\cite{P} indicates and illustrates in planarly layered models the Ill-posedness of the IBVP. Interestingly, various studies and
implementations have resorted to ``blocky'' (and pseudo-layered)
representations of resistivity (\cite{Lo-A-D}, \cite{Fa-O},
\cite{Au-VC}) and, hence, fit the class of functions for which
Lipschitz stability estimates have been obtained. Finally, we mention
the complementary frequency-dependent transient electromagnetic (TEM),
magnetotelluric (MT) and electroseismic methods. The hybrid inverse
problem of electroseismic conversion was analyzed by Chen and De Hoop
\cite{Ch-dH}. The further analysis of TEM/MT \cite{Gu} is a subject of
current research.

In recent years, there has been a renewed and growing interest in the
application of electrostatic and diffuse electromagnetic inverse
boundary value problems in geophysics driven by the idea of
combining different probing fields, including acousto-elastic waves,
to identify the (poro-elastic) rock properties in Earth's interior
within a particular geological structure in an integrated
fashion. These properties certainly will not vary smoothly. We capture
the geological structure in a domain partition, let the properties be
discontinuous across subdomain boundaries of geological significance,
and approximate the parameters, here conductivity in the electrostatic
problem, in each subdomain by linear interpolation. (From a rock
physics point of view, this interpolation should be obtained from a
nonlinear upscaling, which is still an active area of research). This
approach, and the generality of these approximations, analyzed in the
context of conditional well-posedness are the novelty of this paper.

The outline of the paper is as follows. Our main assumptions and our
main result (Theorem \ref{teorema principale}) are given in section
2. Section 3 contains the proof of the main result, as well as two
intermediate results (Theorem \ref{teorema stime asintotiche} and
Proposition \ref{proposizione unique continuation finale}) needed to
build the necessary machinery. Theorem \ref{teorema stime asintotiche}
provides original asymptotic estimates for the Green's function of the
conductivity equation, its gradient and a mixed derivatives, for
conductivities that are linear on each domain $D_j$ of a given
partition $\{ D_j \}$ of $\Omega$. These asymptotic estimates are
given at the interfaces between the domains $D_j$, where the
conductivity is discontinuous. Proposition \ref{proposizione unique
  continuation finale} provides estimates of unique continuation of
the solution to the conductivity equation for piecewise linear
conductivities. Section 4 is devoted to the proof of Theorem
\ref{teorema stime asintotiche} and Proposition \ref{proposizione
  unique continuation finale}. The latter is based on the argument
introduced in \cite{A-V}[proof of Proposition 4.4], therefore only the
main differences in the two proofs are highlighted in the present
paper.

\section{Main Result}\label{sec2}
\setcounter{equation}{0}
\subsection{Notation and definitions}\label{subsec notation and definitions}

In several places within this manuscript it will be useful to single out one coordinate
direction. To this purpose, the following notations for
points $x\in \mathbb{R}^n$ will be adopted. For $n\geq 3$,
a point $x\in \mathbb{R}^n$ will be denoted by
$x=(x',x_n)$, where $x'\in\mathbb{R}^{n-1}$ and $x_n\in\mathbb{R}$.
Moreover, given a point $x\in \mathbb{R}^n$,
we will denote with $B_r(x), B_r'(x)$ the open balls in
$\mathbb{R}^{n},\mathbb{R}^{n-1}$ respectively centred at $x$ with radius $r$
and by $Q_r(x)$ the cylinder

\[Q_r(x)=B_r'(x')\times(x_n-r,x_n+r).\]

We will also denote

\begin{eqnarray*}
& & \mathbb{R}^n_+ = \{(x',x_n)\in \mathbb{R}^n| x_n>0 \};\quad\mathbb{R}^n_- = \{(x',x_n)\in \mathbb{R}^n| x_n<0 \};\\
& & B^+_r = B_r\cap\mathbb{R}^n_+;\quad B^-_r = B_r\cap\mathbb{R}^n_-;\\
& & Q^+_r = Q_r\cap\mathbb{R}^n_+;\quad Q^{-}_r = Q_r\cap\mathbb{R}^n_-.
\end{eqnarray*}

In the sequel, we will make a repeated use of quantitative
notions of smoothness for the boundaries of various domains. Let
us introduce the following notation and definitions.

\begin{definition}\label{def Lipschitz boundary}
Let $\Omega$ be a domain in $\mathbb R^n$. We say that a portion
$\Sigma$ of $\partial\Omega$ is of Lipschitz class with constants
$r_0,L$ if for any $P\in\partial\Sigma$ there exists a rigid
transformation of $\mathbb R^n$ under which we have $P=0$ and
$$\Omega\cap Q_{r_0}=\{x\in Q_{r_0}\,:\,x_n>\varphi(x')\},$$
where $\varphi$ is a Lipschitz function on $B'_{r_0}$ satisfying

\[\varphi(0)=0;\qquad
\|\varphi\|_{C^{0,1}(B'_{r_0})}\leq Lr_0.\]

It is understood that $\partial\Omega$ is of Lipschitz class with
constants $r_0,L$ as a special case of $\Sigma$, with
$\Sigma=\partial\Omega$.
\end{definition}

\begin{definition}\label{flat portion}
Let $\Omega$ be a domain in $\mathbb R^n$. We say that a portion $\Sigma$ of
$\partial\Omega$ is a flat portion of size $r_0$
if for any $P\in\Sigma$ there exists a rigid transformation of
$\mathbb R^n$ under which we have $P=0$ and



\begin{eqnarray}
\Sigma\cap{Q}_{r_{0}/3} &=&\{x\in
Q_{r_0/3}|x_n=0\}\nonumber\\
\Omega\cap {Q}_{r_{0}/3} &=&\{x\in
Q_{r_0/3}|x_n>0\}\nonumber\\
\left(\mathbb{R}^{n}\setminus\Omega\right)\cap {Q}_{r_{0}/3} &=&\{x\in
Q_{r_0/3}|x_n<0\},
\end{eqnarray}
\end{definition}


Let us rigorously define the local D-N map.\\

\begin{definition}\label{DN}
Let $\Omega$ be a domain in $\mathbb{R}^n$ with Lipschitz boundary
$\partial\Omega$ and $\Sigma$ an open non-empty (flat) open portion of
$\partial\Omega$. Let us introduce the subspace of
$H^{\frac{1}{2}}(\partial\Omega)$
\begin{equation}\label{Hco}
H^{\frac{1}{2}}_{co}(\Sigma)=\big\{f\in
H^{\frac{1}{2}}(\partial\Omega) \:\vert\:\textnormal{supp}
\:f\subset\Sigma\big\}.
\end{equation}

and its dual $H^{-\frac{1}{2}}_{co}(\Sigma)$. Assume that $\gamma\in
L^{\infty}(\Omega)$ satisfies
\begin{eqnarray}\label{ellitticita'sigma}
\lambda^{-1}\leq{\gamma}(x)\leq\lambda,
& &for\:almost\:every\:x\in\Omega,
\end{eqnarray}

then the local Dirichlet-to-Neumann map associated to $\gamma$ and
$\Sigma$ is the operator
\begin{equation}\label{mappaDN}
\Lambda_{\gamma}^{\Sigma}:H^{\frac{1}{2}}_{co}(\Sigma)\longrightarrow
{H}^{-\frac{1}{2}}_{co}(\Sigma)
\end{equation}
 defined by
\begin{equation}\label{def DN locale}
<\Lambda_{\gamma}^{\Sigma}\:g,\:\eta>\:=\:\int_{\:\Omega}
\gamma(x) \nabla{u}(x)\cdot\nabla\phi(x)\:dx,
\end{equation}
for any $g$, $\eta\in H^{\frac{1}{2}}_{co}(\Sigma)$, where
$u\in{H}^{1}(\Omega)$ is the weak solution to
\begin{displaymath}
\left\{ \begin{array}{ll} \textnormal{div}(\gamma(x)\nabla
u(x))=0, &
\textrm{$\textnormal{in}\quad\Omega$},\\
u=g, & \textrm{$\textnormal{on}\quad{\partial\Omega},$}
\end{array} \right.
\end{displaymath}
and $\phi\in H^{1}(\Omega)$ is any function such that
$\phi\vert_{\partial\Omega}=\eta$ in the trace sense. Here we
denote by $<\cdot,\:\cdot>$ the $L^{2}(\partial\Omega)$-pairing
between $H^{\frac{1}{2}}_{co}(\Sigma)$ and its dual
$H^{-\frac{1}{2}}_{co}(\Sigma)$.
\end{definition}
Note that, by \eqref{def DN locale}, it is easily verified that
$\Lambda^{\Sigma}_{\gamma}$ is selfadjoint. We will denote by
$\parallel\cdot\parallel_{*}$ the norm on the Banach space of
bounded linear operators between $H^{\frac{1}{2}}_{co}(\Sigma)$
and $H^{-\frac{1}{2}}_{co}(\Sigma)$.

\begin{remark}
Note that the space $H^{\frac{1}{2}}_{00}(\Sigma)$ (\cite{LiM}, {Chapter $1$}) is  the closure of $H^{\frac{1}{2}}_{co}(\Sigma)$ in $H^{\frac{1}{2}}(\partial\Omega)$, therefore the local DN map could be equivalently given by replacing in definition \ref{DN} the spaces $H^{\frac{1}{2}}_{co}(\Sigma)$, $H^{-\frac{1}{2}}_{co}(\Sigma)$ with $H^{\frac{1}{2}}_{00}(\Sigma)$ and its dual ${H}^{-\frac{1}{2}}_{00}(\Sigma)$ respectively  and by continuing to use the notation $<\cdot,\cdot>$ for the $L^2(\partial\Omega)$-pairing between $H^{\frac{1}{2}}_{00}(\Sigma)$ and ${H}^{-\frac{1}{2}}_{00}(\Sigma)$.

\end{remark}

\subsection{Assumptions}\label{subsection assumptions}

\subsubsection{Assumptions about the domain $\Omega$}\label{subsec assumption domain}

\begin{enumerate}

\item We assume that $\Omega$ is a domain in $\mathbb{R}^n$
satisfying

\begin{equation}\label{assumption Omega}
|\Omega|\leq N r_0 ^n,
\end{equation}

where $|\Omega|$ denotes the Lebesgue measure of $\Omega$.


\item We fix an open non-empty subset $\Sigma$ of $\partial\Omega$
(where the measurements in terms of the local D-N map are taken).

\item \[\bar\Omega = \bigcup_{j=1}^{N}\bar{D}_j,\]

where $D_j$, $j=1,\dots , N$ are known open sets of
$\mathbb{R}^n$, satisfying the conditions below.

\begin{enumerate}
\item $D_j$, $j=1,\dots , N$ are connected and pairwise
nonoverlapping polyhedrons.

\item $\partial{D}_j$, $j=1,\dots , N$ are of Lipschitz class with
constants $r_0$, $L$.

\item There exists one region, say $D_1$, such that
$\partial{D}_1\cap\Sigma$ contains a \emph{flat} portion
$\Sigma_1$ of size $r_0$ and for every $i\in\{2,\dots , N\}$ there exists $j_1,\dots ,
j_K\in\{1,\dots , N\}$ such that

\begin{equation}\label{catena dominii}
D_{j_1}=D_1,\qquad D_{j_K}=D_i.
\end{equation}

In addition we assume that, for every $k=1,\dots , K$,
$\partial{D}_{j_k}\cap \partial{D}_{j_{k-1}}$ contains a
\emph{flat} portion $\Sigma_k$ of size $r_0$ (here we agree that
$D_{j_0}=\mathbb{R}^n\setminus\Omega$), such that


\[\Sigma_k\subset\Omega,\quad\mbox{for\:every}\:k=2,\dots , K,\]

and, for every $k=1,\dots , K$, there exists $P_k\in\Sigma_k$ and
a rigid transformation of coordinates under which we have $P_k=0$
and

\begin{eqnarray}
\Sigma_k\cap{Q}_{r_{0}/3} &=&\{x\in
Q_{r_0/3}|x_n=0\}\nonumber\\
D_{j_k}\cap {Q}_{r_{0}/3} &=&\{x\in
Q_{r_0/3}|x_n>0\}\nonumber\\
D_{j_{k-1}}\cap {Q}_{r_{0}/3} &=&\{x\in
Q_{r_0/3}|x_n<0\},
\end{eqnarray}

\end{enumerate}
\end{enumerate}

\subsubsection{A-priori information on the conductivity $\gamma$}

We will consider a conductivity function $\gamma$ of type

\begin{subequations}
\begin{eqnarray}\label{a priori info su sigmaj}
& &\gamma(x)=\sum_{j=1}^{N}\gamma_{j}(x)\chi_{D_j}(x),\qquad
x\in\Omega,\label{conductivity 1}\\
& & \gamma_{j}(x)=a_j+A_j\cdot x\label{conductivity 2},
\end{eqnarray}
\end{subequations}

where $a_j\in\mathbb{R}$, $A_j\in\mathbb{R}^n$ and $D_j$, $j=1,\dots ,
N$ are the given subdomains introduced in section \ref{subsec assumption
domain}. We also assume that

\begin{equation}\label{apriorigamma}
\lambda^{-1}\le \gamma_j\le  \lambda, \qquad\textnormal{a.e\:in}\:\Omega ,\quad\mbox{for
any}\:j=1,\dots n,
\end{equation}

for some positive constant $\lambda$.


\begin{definition}
Let $N$, $r_0$, $L$, $M$, $\lambda$ be given
positive numbers with $N\in\mathbb{N}$. We will
refer to this set of numbers, along with the space dimension $n$,
as to the \textit{a-priori data}.
\end{definition}

\begin{remark}\label{remark finite dimensional space}
Observe that the class of functions of the form \eqref{conductivity 1} - \eqref{conductivity 2} is a finite dimensional linear space. The $L^{\infty}$ - norm $||\gamma||_{L^{\infty}(\Omega)}$ is equivalent to the norm

\[|||\gamma|||=\textnormal{max}_{j=1,\dots , N}\left\{|a_j|+|A_j|\right\}\]

modulo constants which only depend on the a-priori data.
\end{remark}

From now on for simplicity we will write

\[\Lambda_i = \Lambda^{\Sigma}_{\gamma^{(i)}},\qquad i=1,2.\]

\begin{theorem}\label{teorema principale}
Let $\Omega$, $D_j$, $j=1,\dots , N$ and $\Sigma$ be a domain, $N$ subdomains of $\Omega$ and a portion of $\partial\Omega$ as in section \ref{subsec assumption domain} respectively.
Let $\gamma^{(i)}$, $i=1,2$ be two conductivities satisfying \eqref{apriorigamma} and of type

\begin{equation}\label{a priori info su sigma}
\gamma^{(i)}=\sum_{j=1}^{N}\gamma^{(i)}_{j}(x)\chi_{D_j}(x),\qquad
x\in\Omega,
\end{equation}

where

\[\gamma^{(i)}_{j}(x)=a^{(i)}_j+A^{(i)}_j\cdot x,\]

with $a^{(i)}_j\in\mathbb{R}$ and $A^{(i)}_j\in\mathbb{R}^n$, then we have

\begin{equation}\label{stabilita' globale}
||\gamma^{(1)}-\gamma^{(2)}||_{L^{\infty}(\Omega)}\leq C
||\Lambda_1-\Lambda_2||_{\ast},
\end{equation}

where $C$ is a positive constant that depends on the a-priori data
only.

\end{theorem}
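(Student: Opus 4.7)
The plan is to adapt the Alessandrini--Vessella scheme \cite{A-V} from the piecewise constant setting to the piecewise linear one. By Remark~\ref{remark finite dimensional space}, the space of admissible conductivities is finite dimensional and
\[
\|\gamma^{(1)}-\gamma^{(2)}\|_{L^{\infty}(\Omega)}\le C\,\max_{j=1,\dots,N}\Bigl\{|a_j^{(1)}-a_j^{(2)}|+|A_j^{(1)}-A_j^{(2)}|\Bigr\},
\]
so it suffices to bound each pair $(a_j^{(1)}-a_j^{(2)},\,A_j^{(1)}-A_j^{(2)})$ by $\|\Lambda_1-\Lambda_2\|_*$, which I would do by finite induction along the chain $D_{j_1}=D_1,\dots,D_{j_K}=D_i$ of subdomains joined by flat interfaces $\Sigma_k$ as in subsection~\ref{subsec assumption domain}. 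The workhorse is Alessandrini's identity
\[
\int_{\Omega}(\gamma^{(1)}-\gamma^{(2)})\,\nabla u_1\cdot\nabla u_2\,dx \;=\;\langle(\Lambda_1-\Lambda_2)g_1,\,g_2\rangle,
\]
tested with $u_i$ equal to Green's functions $G_i(\cdot,y_i)$ for $\operatorname{div}(\gamma^{(i)}\nabla\cdot\,)$, whose poles $y_i$ are placed on the opposite side of a chosen flat piece after local reflection/extension across it. Such choices are admissible because, up to a correction localized away from the relevant interface, the corresponding Dirichlet data are supported in $\Sigma$ and their dual norms can be controlled independently of the distance of $y_i$ from the flat piece.

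\textbf{Base step on $D_1$.} Pick $P_1\in\Sigma_1\subset\Sigma$ and let $y_1,y_2$ approach $P_1$ from the side external to $\Omega$ after reflecting across $\Sigma_1$. Bounding the right-hand side of Alessandrini's identity by $\|\Lambda_1-\Lambda_2\|_*$ times the $H^{1/2}_{co}(\Sigma)$-norms of the boundary traces of $G_i(\cdot,y_i)$, and comparing with the leading singular term produced by Theorem~\ref{teorema stime asintotiche} on the left-hand side, one obtains a pointwise estimate controlling $\gamma^{(1)}_1(P_1)-\gamma^{(2)}_1(P_1)=a_1^{(1)}-a_1^{(2)}+(A_1^{(1)}-A_1^{(2)})\cdot P_1$. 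Sliding $P_1$ along the flat patch $\Sigma_1$ and then differentiating the identity with respect to the tangential components of the poles (the mixed-derivative asymptotics of Theorem~\ref{teorema stime asintotiche} are tailored to this step) separates $a_1$ from the tangential part of $A_1$; a final differentiation transverse to $\Sigma_1$ recovers the normal component of $A_1$.

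\textbf{Inductive step.} Suppose the estimate is established for $D_{j_1},\dots,D_{j_{k-1}}$. To reach $D_{j_k}$, take $P_k\in\Sigma_k$ and position $y_1,y_2$ on the two sides of $\Sigma_k$, approaching $P_k$ from inside $D_{j_{k-1}}$ and $D_{j_k}$ respectively. The integrand of Alessandrini's identity naturally splits into the wanted contribution from $D_{j_k}$ and remainder contributions from the subdomains already under control. The inductive bounds dispatch the explicit part of these remainders; the implicit part --- namely, the need to turn the global bound $\|\Lambda_1-\Lambda_2\|_*$ on the distant boundary portion $\Sigma$ into an interior quantitative bound near $\Sigma_k$ --- is handled by the quantitative unique continuation for the conductivity equation with piecewise linear coefficients provided by Proposition~\ref{proposizione unique continuation finale}, propagated along the chain \eqref{catena dominii}. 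Matching the leading asymptotics of Theorem~\ref{teorema stime asintotiche} at $P_k$, then repeating the tangential-sliding and normal-differentiation scheme of the base step, gives
\[
|a_{j_k}^{(1)}-a_{j_k}^{(2)}|+|A_{j_k}^{(1)}-A_{j_k}^{(2)}|\;\le\;C\Bigl(\|\Lambda_1-\Lambda_2\|_*+\sum_{\ell<k}\bigl(|a_{j_\ell}^{(1)}-a_{j_\ell}^{(2)}|+|A_{j_\ell}^{(1)}-A_{j_\ell}^{(2)}|\bigr)\Bigr),
\]
which closes the induction. Collecting over all $j$ and invoking the equivalence of norms of Remark~\ref{remark finite dimensional space} yields \eqref{stabilita' globale}.

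\textbf{Expected main obstacle.} Compared with the piecewise constant case, the genuinely new difficulty is that each subdomain contributes $n+1$ unknowns $(a_j,A_j)$ rather than a single scalar, so the boundary data must detect not only the \emph{jump} of $\gamma$ across each interface but also the jump of its linear part. This is exactly what forces one to establish asymptotics for \emph{derivatives in the pole} and \emph{mixed derivatives in both arguments} of the Green's functions at the interfaces, and it is the source of the additional technical work in Theorem~\ref{teorema stime asintotiche}; simultaneously the quantitative unique continuation of Proposition~\ref{proposizione unique continuation finale} must be sharp enough that the error terms surviving the extra differentiations do not overwhelm the leading singular matching. Carrying out these asymptotics rigorously across the polyhedral interfaces, where $\gamma^{(i)}$ is discontinuous, is where I expect the main technical effort to reside.
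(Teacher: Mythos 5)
Your overall architecture matches the paper's: the Alessandrini identity tested on Green's functions, the asymptotics of Theorem \ref{teorema stime asintotiche} to isolate the leading singularity at each flat interface, mixed derivatives in the poles to capture the normal component of $A_j^{(1)}-A_j^{(2)}$, and Proposition \ref{proposizione unique continuation finale} to propagate smallness along the chain \eqref{catena dominii}. However, there is a genuine gap in how you close the argument. Your inductive inequality
\[
|a_{j_k}^{(1)}-a_{j_k}^{(2)}|+|A_{j_k}^{(1)}-A_{j_k}^{(2)}|\le C\Bigl(\|\Lambda_1-\Lambda_2\|_*+\sum_{\ell<k}\bigl(|a_{j_\ell}^{(1)}-a_{j_\ell}^{(2)}|+|A_{j_\ell}^{(1)}-A_{j_\ell}^{(2)}|\bigr)\Bigr)
\]
is not what the cited machinery delivers. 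Quantitative unique continuation across $k$ interfaces only yields estimates of the form
\[
\delta_l\le \delta_{l-1}+C(\varepsilon_0+\delta_{l-1}+E)\Bigl(\omega_{1/C}^{(2(l+1))}\bigl(\tfrac{\varepsilon_0+\delta_{l-1}}{\varepsilon_0+\delta_{l-1}+E}\bigr)\Bigr)^{1/C},
\]
i.e.\ a \emph{logarithmic} modulus with the unknown $E=\|\gamma^{(1)}-\gamma^{(2)}\|_{L^\infty(\Omega)}$ appearing multiplicatively on the right. Unrolling such a recursion does not give a Lipschitz bound; if your linear recursion were available directly, the whole difficulty of the problem would evaporate. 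The Lipschitz estimate only emerges at the very end through an absorption step: after $K$ iterations one arrives at $E\le C(\varepsilon_0+E)\bigl(\omega_{1/C}^{(K^2)}(\varepsilon_0/(\varepsilon_0+E))\bigr)^{1/C}$, one assumes $E>\varepsilon_0 e^2$ (otherwise done), deduces $1/C\le \omega_{1/C}^{(K^2)}(\varepsilon_0/E)$, and inverts $\omega$ to conclude $E\le C'\varepsilon_0$. This self-improvement argument, which crucially exploits that $E$ sits on both sides of the final inequality, is entirely absent from your proposal and is the key idea converting logarithmic stability into Lipschitz stability on the finite-dimensional class.

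A second, smaller, inaccuracy: in the inductive step you propose placing the two poles on \emph{opposite} sides of $\Sigma_k$, one inside $D_{j_k}$. This is incompatible with the framework, since $\tilde S_{\mathcal{U}_{k-1}}(\cdot,z)$ and $\tilde S_{\mathcal{U}_{k-1}}(y,\cdot)$ solve the conductivity equation only in $\mathcal{W}_{k-1}=\Omega_0\setminus\overline{\mathcal{U}_{k-1}}$, so the propagation of smallness from $D_0$ can only carry both poles up to points $w_{\bar h}(Q_{k+1})=Q_{k+1}-\lambda_{\bar h}\nu(Q_{k+1})$ on the $D_{j_{k-1}}$ side of the interface; the blow-up as they approach $\Sigma_k$ from that one side is then matched against the asymptotics of Theorem \ref{teorema stime asintotiche}. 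Also note that the paper determines $a_j$ and the tangential part of $A_j$ more simply than you suggest: it first obtains a stability-at-the-interface estimate for the values of $\gamma^{(1)}-\gamma^{(2)}$ on the flat patch and then exploits affineness by evaluating at $P_k$ and at $P_k+\tfrac{r_0}{5}e_j$ and taking differences; only the normal component $A_j\cdot\nu$ requires the mixed-derivative singular solutions.
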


\begin{remark}
In this paper we are assuming that the conductivity $\gamma$ is a piecewise linear function. However, the case where the resistivity function $\rho={\gamma}^{-1}$ is piecewise linear, could be treated equally well.
\end{remark}


\section{Proof of the main result}\label{PMR}

The proof of our main result (theorem \ref{teorema principale}) is based on an argument that combines asymptotic type of estimates for the Green's function of the operator

\begin{equation}\label{operatore conduttivita' misurabile}
L=\mbox{div}\left(\gamma(x)\nabla\right)\qquad\mbox{in}\quad\Omega,
\end{equation}

(theorem \ref{teorema stime asintotiche}), with $\gamma$ satisfying \eqref{conductivity 1}-\eqref{apriorigamma}, together with a result of unique continuation (proposition \ref{proposizione unique continuation finale}) for solutions to

\[Lu=0,\qquad\mbox{in}\quad\Omega.\]

Our idea in estimating $\gamma^{(1)}-\gamma^{(2)}$  exploits, on one hand, an estimate from below of the the blow up of  some singular solutions (which we will introduce below) $S_{\mathcal{U}}$ and some of its derivatives if  $\gamma^{(1)}-\gamma^{(2)}$ is large at some point. On the other hand, we will use estimates of propagation of smallness to show that $S_{\mathcal{U}}$ needs to be small if $\Lambda_1-\Lambda_2$ is small. We will give the precise formulation of these results in what
follows.

\subsection{Singular solutions}

We will start with some general considerations about the Green's
function $G(x,y)$ associated to the operator \eqref{operatore conduttivita' misurabile}, where $\gamma$ is merely a measurable matrix valued function
satisfying the ellipticity condition \eqref{ellitticita'sigma}.

\subsubsection{Green's function}
 If $L$ is the operator given in \eqref{operatore conduttivita' misurabile}, then for every $y\in\Omega$, the Green's function $G(\cdot,y)$ is the weak solution to the Dirichlet problem

\setcounter{equation}{0}
\begin{equation}\label{GC}
\left\{
\begin{array}
{lcl} \mbox{div}(\gamma\nabla G(\cdot,y))=-\delta(\cdot - y)\ ,&
\mbox{in $\Omega$ ,}
\\
 G(\cdot,y)= 0\ ,   & \mbox{on $\partial\Omega$ ,}
\end{array}
\right.
\end{equation}

where $\delta(\cdot -y)$ is the Dirac measure at $y$. We recall
that $G$ satisfies the properties  (\cite{Lit-St-W})


\begin{equation}\label{simmetry G}
G(x,y)=G(y,x)\qquad\mbox{for}\:\mbox{every}\:x,y\in\Omega,\quad x\neq
y,
\end{equation}

\begin{eqnarray}\label{standardbeh}
0<G(x,y)< C|x-y|^{2-n}\qquad\mbox{for}\:\mbox{every}\:x,y\in\Omega,\quad
x\neq y,
\end{eqnarray}

where $C>0$ is a constant depending on $\lambda$ and $n$ only. Moreover, the following result holds true.

\begin{proposition}\label{proposizione green function}
For any $y\in \Omega$ and every $r>0$ we have that
\begin{eqnarray}\label{caccio}
\int_{\Omega\setminus B_r(y)}|\nabla G(\cdot, y)|^2\le C r^{2-n} \
\end{eqnarray}
\end{proposition}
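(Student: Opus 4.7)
The plan is to prove \eqref{caccio} as a standard Caccioppoli-type energy inequality for $G(\cdot,y)$, using only the ellipticity \eqref{ellitticita'sigma} and the pointwise bound \eqref{standardbeh} that has just been recalled. First I would dispose of the trivial case $r\ge \operatorname{diam}(\Omega)$, in which the integration domain is empty, so I may assume $r<\operatorname{diam}(\Omega)$. Then I pick a smooth radial cutoff $\eta\in C^{\infty}_c(\mathbb{R}^n)$ with $\eta\equiv 0$ on $B_{r/2}(y)$, $\eta\equiv 1$ on $\mathbb{R}^n\setminus B_r(y)$, and $|\nabla\eta|\le C/r$ supported in the annulus $A_r(y):=B_r(y)\setminus B_{r/2}(y)$.

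The key point is that $v:=\eta^{2}G(\cdot,y)$ belongs to $H^{1}_{0}(\Omega)$: the factor $\eta^{2}$ vanishes in a full neighbourhood of $y$ (where $G(\cdot,y)$ is only locally $H^{1}$), while the Dirichlet condition $G(\cdot,y)=0$ on $\partial\Omega$ carries over to $v$. Thus $v$ is an admissible test function in the weak formulation of \eqref{GC}, and since $v(y)=0$ the singular right-hand side vanishes, giving
\begin{equation*}
0=\int_{\Omega}\gamma\,\nabla G\cdot\nabla v\,dx
=\int_{\Omega}\gamma\,\eta^{2}|\nabla G|^{2}\,dx
+2\int_{\Omega}\gamma\,\eta\,G\,\nabla G\cdot\nabla\eta\,dx.
\end{equation*}
Applying Cauchy--Schwarz with a small parameter to the cross term and using $\lambda^{-1}\le\gamma\le\lambda$ to absorb a fraction of the first integral into the left-hand side yields the Caccioppoli-type inequality
\begin{equation*}
\int_{\Omega}\eta^{2}|\nabla G(\cdot,y)|^{2}\,dx
\;\le\; C\int_{\Omega}|\nabla\eta|^{2}\,G(\cdot,y)^{2}\,dx,
\end{equation*}
for some $C=C(\lambda)>0$.

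To conclude, I use the support and size of $\nabla\eta$ together with \eqref{standardbeh}: on $A_r(y)$ one has $|\nabla\eta|^{2}\le C/r^{2}$ and $G(x,y)\le C|x-y|^{2-n}\le C\,r^{2-n}$, while $|A_r(y)|\le C r^{n}$. Therefore
\begin{equation*}
\int_{\Omega}|\nabla\eta|^{2}\,G^{2}\,dx
\;\le\; C\,r^{-2}\cdot r^{2(2-n)}\cdot r^{n}
\;=\;C\,r^{2-n},
\end{equation*}
and since $\eta\equiv 1$ on $\Omega\setminus B_r(y)$, this gives $\int_{\Omega\setminus B_r(y)}|\nabla G|^{2}\le C r^{2-n}$, as claimed.

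I do not expect any serious obstacle. The only point requiring a line of justification is the admissibility of $v=\eta^{2}G(\cdot,y)$ as a test function in $H^{1}_{0}(\Omega)$, which is ensured by the fact that $\eta$ vanishes in a neighbourhood of the singular point, so that one needs $G(\cdot,y)\in H^{1}(\Omega\setminus B_{r/4}(y))$ and its vanishing trace on $\partial\Omega$, both of which are standard properties of the Dirichlet Green's function under \eqref{ellitticita'sigma}. The rest is a routine Caccioppoli manipulation combined with the already-stated pointwise estimate \eqref{standardbeh}.
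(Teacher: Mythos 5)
Your proof is correct and follows exactly the route the paper indicates: the paper's own proof is a one-line remark that the estimate follows by "combining Caccioppoli inequality with \eqref{standardbeh}" (citing \cite{A-V}, Proposition 3.1), and your argument with the cutoff $\eta$ vanishing near the singularity, the test function $\eta^{2}G(\cdot,y)$, and the pointwise bound on the annulus is precisely the standard way to carry that out. No discrepancies to report.
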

where $C>0$ depends on $\lambda$ and $n$ only.
\begin{proof}
The proof can be obtained by combining Caccioppoli inequality with
\eqref{standardbeh} (\cite{A-V}, Proposition 3.1).
\end{proof}

\subsubsection{The $S_\mathcal{U}$ singular solutions}

Let $\gamma^{(i)}$, $i=1,2$ be two measurable
functions satisfying the ellipticity condition
\eqref{ellitticita'sigma} and let $G_{i}(x,y)$ be the Green's
functions associated to the operators

\begin{equation}\label{operatori Li}
L_i =
\mbox{div}\left(\gamma^{(i)}(x)\nabla\right)\qquad\mbox{in}\quad\Omega,\quad
i=1,2.
\end{equation}

Let $\mathcal{U}$ be an open subset of $\Omega$ and
$\mathcal{W}=\Omega\setminus\overline{\mathcal{U}}$. For any
$y,z\in\mathcal{W}$ we define

\begin{eqnarray}
S_{\mathcal{U}}({y},z)=\int_{\mathcal{U}}(\gamma^{(1)}(x)-\gamma^{(2)}(x))\nabla_x
G_1(x,{y})\cdot\nabla_x G_2(z,x)dx .
\end{eqnarray}




We recall that (see \cite{A-V}) for every $y,z\in\mathcal{W}$ we have that $S_{\mathcal{U}}(\cdot,z),
S_{\mathcal{{U}}}(y,\cdot)\in H^1_{loc}(\mathcal{W})$ are weak solutions to
\begin{eqnarray}
& &\textnormal{div} \left(\gamma^{(1)}(\cdot)\nabla
S_{\mathcal{U}}(\cdot,z)\right)=0,\quad\mbox{in}\:\mathcal{W}\\
& &\textnormal{div}
\left(\gamma^{(2)}(\cdot) \nabla S_{\mathcal{U}}(y,\cdot)\right)=0 \
,\quad\mbox{in}\:\mathcal{W}.
\end{eqnarray}

It is expected that $S_{\mathcal{{U}}}(y,z)$ blows up as $y,z$ approach simultaneously one point of $\partial\mathcal{U}$.





We will denote with
\begin{eqnarray}\label{GC2}
\Gamma(x,y)=\frac{1}{(n-2)\omega_n}|x-y|^{2-n},
\end{eqnarray}
the fundamental solution of the Laplace operator (here $\omega _n/n$ denotes the volume of the unit ball in $\mathbb{R}^n$). If $D_i$, $i=1,\dots , N$ are the domains introduced in section
\ref{subsec assumption domain} and $L$ is the operator given by
\eqref{operatore conduttivita' misurabile}, we will give asymptotic estimates for the Green's
function of $L$, with respect to
\eqref{GC2} at the interfaces between the domains $D_i$, $i=1,\dots N$.
These estimates are given below. In what follows let $G$ be the Green's function associated to
the operator $L$ in $\Omega$.


\subsubsection{Asymptotics at interfaces}\label{subsection Green function}

\begin{theorem}\label{teorema stime asintotiche}
Let $Q_{{l}+1}$ be a point such that  $Q_{{l}+1}\in B_{\frac{r_0}{8}}(P_{{l}+1})\cap \Sigma_{{l}+1}$ with $l\in\{1, \dots, N-1 \}$\ .
There exist constants $\beta, \theta, 0<\beta<1, 0<\theta<1$ and $C>0$ depending on the a priori data only such that following inequalities hold true for every $\bar{x}\in B_{\frac{r_0}{16}}(Q_{l+1})\cap
D_{j_{l+1}}$ and every $\bar{y}=Q_{l+1}-r e_n$, where $r\in
(0,\frac{r_0}{{16}})$ \

\begin{eqnarray}
& &\left|G(\bar{x},\bar{y}) -
\frac{2}{\gamma_{j_l}(Q_{l+1})+\gamma_{j_l+1}(Q_{l+1})}\Gamma(\bar{x},\bar{y})\right| \le  {C}|\bar{x}-\bar{y}|^{3-n} \label{asyfun},\\
& &\left|\nabla_x G(\bar{x},\bar{y}) -
\frac{2}{\gamma_{j_l}(Q_{l+1})+\gamma_{j_l+1}(Q_{l+1})}\nabla_x\Gamma(\bar{x},\bar{y})\right| \le  {C}|\bar{x}-\bar{y}|^{\beta+1-n}\label{asydernor},\\
& &\left|\nabla_y\nabla_xG(\bar{x},\bar{y}) -
\frac{2}{\gamma_{j_l}(Q_{l+1})+\gamma_{j_l+1}(Q_{l+1})}\nabla_y\nabla_x\Gamma(\bar{x},\bar{y})\right| \le  {C}|\bar{x}-\bar{y}|^{\theta-n} \label{asydernorgrad}\ .
\end{eqnarray}

\end{theorem}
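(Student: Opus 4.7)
The plan is to compare the Green's function $G$ of the variable-coefficient operator $L$ with an explicit Green's function for a ``frozen'' transmission operator having piecewise \emph{constant} coefficients. After applying the rigid transformation that brings $Q_{l+1}$ to the origin and flattens the interface, so that $\Sigma_{l+1} \cap Q_{r_0/3} = \{x_n = 0\}$ with $D_{j_{l+1}} \cap Q_{r_0/3} = \{x_n > 0\}$ and $D_{j_l} \cap Q_{r_0/3} = \{x_n < 0\}$, introduce the frozen conductivity
\[
\tilde\gamma_0(x) \;=\; \gamma_{j_{l+1}}(Q_{l+1})\,\chi_{\{x_n>0\}}(x) \;+\; \gamma_{j_l}(Q_{l+1})\,\chi_{\{x_n<0\}}(x).
\]
For the associated operator $\tilde L_0 = \operatorname{div}(\tilde\gamma_0 \nabla\,\cdot\,)$ in the half-space, the fundamental solution $\tilde G_0(x,y)$ is explicitly computable by the classical method of images (reflection across $\{x_n=0\}$). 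When $y$ lies in the lower half-space, the singularity of $\tilde G_0(\cdot,y)$ at $y$ is isotropic and equals $\frac{2}{\gamma_{j_l}(Q_{l+1})+\gamma_{j_{l+1}}(Q_{l+1})}\Gamma(x,y)$, which is exactly the constant that appears in \eqref{asyfun}--\eqref{asydernorgrad}. This identification explains why this particular multiple of $\Gamma$ is the correct leading-order approximation.

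Set $R(x,y) = G(x,y) - \frac{2}{\gamma_{j_l}(Q_{l+1})+\gamma_{j_{l+1}}(Q_{l+1})}\Gamma(x,y)$, or more conveniently $R = G - \tilde G_0$. A direct computation shows that $R(\cdot,\bar y)$ solves
\[
\operatorname{div}_x(\gamma(x)\nabla_x R(x,\bar y)) \;=\; \operatorname{div}_x\bigl((\tilde\gamma_0(x)-\gamma(x))\nabla_x \tilde G_0(x,\bar y)\bigr)
\]
in a neighborhood of $Q_{l+1}$, with boundary data on the sphere $\partial B_{r_0/8}(Q_{l+1})$ that are of the regular order $C r_0^{2-n}$. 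The crucial point is that $\gamma$ is \emph{linear} in each $D_j$ and agrees with $\tilde\gamma_0$ at the origin, so the coefficient error is controlled: $|\gamma(x) - \tilde\gamma_0(x)| \leq C|x|$ throughout $Q_{r_0/3}$. Combined with the classical bounds on $\tilde G_0$ and $\nabla_x \tilde G_0$ inherited from those of $\Gamma$, this yields a right-hand side of order $|x||\bar x-\bar y|^{-n}$ after one formal differentiation, which is integrable enough to make $R$ less singular than $\Gamma$ itself.

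To pass from this equation for $R$ to the three pointwise bounds, I would apply the Green's representation formula together with De Giorgi--Nash--Moser estimates and a Caccioppoli-type inequality (Proposition~\ref{proposizione green function}) on dyadic annuli $\{r/2 < |x - \bar y| < r\}$ with $r = |\bar x - \bar y|$. The extra factor $|x|$ in the source produces the gain of one power of $|\bar x - \bar y|$ in \eqref{asyfun}. For \eqref{asydernor} one invokes interior Hölder estimates for the gradient on each side of $\Sigma_{l+1}$, together with the transmission condition, which gives $C^{0,\beta}$ regularity for $\nabla_x R$ across the interface with some $\beta = \beta(\lambda,n) \in (0,1)$. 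The mixed derivative bound \eqref{asydernorgrad} is obtained by first differentiating in the interior variable $\bar y \in D_{j_l} \setminus \Sigma_{l+1}$ (where no interface is crossed), applying Caccioppoli to $\nabla_{\bar y} R$, and then combining with the gradient estimate on $\nabla_x R$; the loss from $\beta$ to $\theta$ reflects the compounded regularity across the interface.

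The main obstacle is the discontinuity of $\gamma$ across $\Sigma_{l+1}$: one cannot invoke classical Schauder theory, so the Hölder exponents $\beta$ and $\theta$ cannot be taken close to $1$ and depend in a nontrivial (essentially unquantifiable) way on $\lambda$. The subtle point is that while the jump of $\gamma$ is already fully absorbed by the reference solution $\tilde G_0$, the linear variation of $\gamma$ within each $D_j$ introduces a genuine error that must be handled by the regularity theory for transmission problems. The symmetry $G(x,y) = G(y,x)$ and the fact that $\bar y = Q_{l+1} - r e_n$ is chosen to lie strictly inside $D_{j_l}$ (away from the interface) are what allows one to close the argument for the mixed derivative, since on the $\bar y$ side the coefficient is smooth and one may differentiate freely.
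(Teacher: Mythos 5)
Your overall strategy coincides with the paper's: flatten the interface at $Q_{l+1}$, freeze the coefficient there to obtain a piecewise constant transmission operator whose fundamental solution $H$ is explicit by reflection (with leading singularity $\tfrac{2}{a^-+a^+}\Gamma$ when $x$ and $y$ lie on opposite sides of the interface), set $R=G-H$, observe that $R$ solves $\operatorname{div}(\gamma\nabla R)=-\operatorname{div}((\gamma-\gamma_0)\nabla H)$ with $|\gamma-\gamma_0|\le C|x|$ by the linearity of $\gamma$ in each subdomain, and estimate $R$ by the representation formula; this is exactly how \eqref{asyfun} is obtained in the paper.

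The gap is in your derivation of \eqref{asydernor} and \eqref{asydernorgrad}. Knowing that $\nabla_x R$ is piecewise $C^{0,\alpha'}$ across the interface (which does follow from the Ladyzhenskaya--Ural'tseva / Li--Vogelius type piecewise $C^{1,\alpha'}$ transmission estimates) yields, at the scale $h=|\bar x-\bar y|$, only the scale-critical bounds $\|\nabla_x R\|_{L^\infty}\lesssim h^{1-n}$ and $|\nabla_x R|_{\alpha'}\lesssim h^{-\alpha'+1-n}$ --- no improvement over $\nabla\Gamma$ whatsoever. The decay $h^{\beta+1-n}$ is not a consequence of regularity alone; it is extracted by interpolating the \emph{improved} sup bound $\|R\|_{L^\infty}\lesssim h^{3-n}$ from \eqref{asyfun} against the critical H\"older seminorm of $\nabla_x R$, via
$\|\nabla R\|_{L^\infty(Q)}\le C\,\|R\|_{L^\infty(Q)}^{\alpha'/(1+\alpha')}\,|\nabla R|_{\alpha',Q}^{1/(1+\alpha')}$
on a one-sided cylinder $Q$ of size comparable to $h$ touching $\bar x$; this is why $\beta$ is a strict fraction of $\alpha'$ rather than $\alpha'$ itself. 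A second interpolation of the same kind --- between the improved sup bound on $\partial_{x_k}R(x,\cdot)$ and the critical H\"older seminorm of $\nabla_y\partial_{x_k}R(x,\cdot)$ on a one-sided cylinder near $\bar y$ --- produces $\theta$. Your remark that one may ``differentiate freely'' in $\bar y$ is also too quick: $\bar y=Q_{l+1}-re_n$ approaches the interface as $r\to0$, so the $y$-derivative estimates must likewise be carried out one-sidedly at scale $h$ (the paper works in the cylinder $B'_{h/8}\times(y_n-h/8,\,y_n)$). Once these two interpolation steps are inserted, your argument matches the paper's.
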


\subsection{Quantitative unique continuation}

We recall that
up to a rigid transformation of coordinates we can assume that

\[P_1=0\qquad ;\qquad (\mathbb{R}^n\setminus\Omega)\cap B_{r_0}=\{(x^{\prime},x_n)\in B_{r_{0}}\:|\:x_n <\varphi(x^{\prime})\},\]

where $\varphi$ is a Lipschitz function such that

\[\varphi(0)=0\qquad\textnormal{and}\qquad ||\varphi||_{C^{0,1}(B_{r_{0}}^{\prime})}\leq Lr_0.\]

Denoting by

\[D_0=\left\{x\in(\mathbb{R}^n\setminus\Omega)\cap B_{r_0}\:\bigg|\:|x_i|<\frac{2}{3}r_0,\:i=1,\dots , n-1,\:\left|x_n-\frac{r_0}{6}\right|<\frac{5}{6}r_0\right\},\]

it turns out that the augmented domain $\Omega_0=\Omega\cup D_0$
is of Lipschitz class with constants $\frac{r_0}{3}$ and
$\tilde{L}$, where $\tilde{L}$ depends on $L$ only. We consider
the operator $L_i$ given by \eqref{operatori Li} and extend
$\gamma^{(i)}$ to $\tilde{\gamma}^{(i)}$ on $\Omega_0$, by
setting $\tilde{\gamma}^{(i)}|_{D_0}=1$,  for $i=1,2$. We denote by
$\tilde{G}_i$ the Green function associated to
$\tilde{L_i}=\textnormal{div}(\tilde\gamma^{(i)}(x)\nabla\cdot)$ in $\Omega_0$,
for $i=1,2$. For any number $r\in \left(0,\frac{2}{3}r_0\right)$
we also denote

\[(D_0)_r = \left\{x\in D_0\:|\:dist(x,\Omega)>r\right\}.\]

Let $K\in{1,\dots , N}$ be the subdomain of $\Omega$ such that

\begin{eqnarray}\label{DK max}
E=\|\gamma^{(1)}-\gamma^{(2)}\|_{L^{\infty}(\Omega)}=\|\gamma^{(1)}-\gamma^{(2)}\|_{L^{\infty}(D_K)}.
\end{eqnarray}


and recall that there exist $j_1,\dots , j_K\in{1,\dots , N}$ such that

\[D_{j_1}=D_1,\dots D_{j_K}=D_K,\]

with $D_{j_1},\dots D_{j_K}$ satisfying assumption $4(d)$. For simplicity, let us rearrange the indices of these subdomains so that the above mentioned chain is simply denoted by $D_1, \dots, D_K, K\le N$.  We also denote

\begin{eqnarray}
& &\mathcal{W}_k = \bigcup_{i=0}^{k}D_{i},\qquad \mathcal{U}_k = \Omega_0\setminus\overline{\mathcal{W}_k},\quad\textnormal{for}\:k=1,\dots,K\\
& &\tilde{S}_{\mathcal{U}_k}(y,z)=\int_{\mathcal{U}_k}(\tilde\gamma^{(1)}-\tilde\gamma^{(2)})\nabla\tilde{ G}_1(\cdot,y)\cdot\nabla\tilde{G}_2(\cdot,z),\quad \textnormal{for}\:k=1,\dots,K.
\end{eqnarray}

We introduce for any number $b>0$ as in \cite{A-V}, the concave
non decreasing function $\omega_{b}(t)$, defined on $(0,+\infty)$,

\begin{displaymath}
\omega_{b}(t)=\left\{ \begin{array}{ll} 2^{b}e^{-2}|\log t|^{-b},
&\quad
t\in (0,e^{-2}),\\
e^{-2}, &\quad t\in[e^{-2},+\infty)
\end{array} \right.
\end{displaymath}

and denote
\begin{eqnarray}
\omega_{b}^{(1)}=\omega,\qquad \omega_{b}^{(j)}=\omega_{b}\circ \omega_{b}^{(j-1)}.
\end{eqnarray}

The following parameters will also be introduced

\begin{eqnarray*}
& &\beta=\arctan{\frac{1}{L}},\quad\beta_1 = \arctan{\left(\frac{\sin\beta}{4}\right)},\quad\lambda_1=\frac{r_0}{1+\sin\beta_1}\\
& & \rho_1=\lambda_1\sin\beta_1,\quad a=\frac{1-\sin\beta_1}{1+\sin\beta_1}\\
& & \lambda_m=a\lambda_{m-1},\quad \rho_m = a\rho_{m-1},\quad\textnormal{for\:every}\:m\geq 2,\\
& & d_m=\lambda_m-\rho_m,\quad m\geq 1.
\end{eqnarray*}

For $k=1,\dots , K$ and a fixed point $\bar{y}\in \Sigma_{k+1}$, denote

\begin{eqnarray}
w_m(\bar{y})=\bar{y}-\lambda_m\nu(\bar{y}), \qquad \mbox{for every} \ m\ge 1 \ ,
\end{eqnarray}

where $\nu(\bar{y})$ is the exterior unit normal to $\partial D_{k}$. The following estimate for $\tilde{S}_{\mathcal{U}_k}(y,z)$ holds true, for $k=1,\dots , K$.

\begin{proposition}\label{proposizione unique continuation finale}({\bf{Estimates of unique continuation}})
If, for a positive number $\varepsilon_0$, we have

\begin{equation}\label{estim0}
\left|\tilde{S}_{\mathcal{U}_k}(y,z)\right|\leq
r_0^{2-n}\varepsilon_0,\quad for\:every\: (y,z)\in
(D_0)_{\frac{r_0}{4}}\times(D_0)_{\frac{r_0}{4}},
\end{equation}


then the following inequality holds true for every $r\in (0,d_1]$
\begin{equation}\label{estim1}
\left|\tilde{S}_{\mathcal{U}_k}\left(w_{\bar{h}}(Q_{k+1}),w_{\bar{h}}(Q_{k+1})\right)\right|
\leq
r_0^{{-n+2}}C_1^{\bar{h}}(E+\varepsilon_0)\left(\omega_{1/C}^{(2k)}\left(\frac{\varepsilon_0}{E+\varepsilon_0}\right)\right)
^{\left(1/C\right)^{\bar{h}}},
\end{equation}
\begin{equation}\label{estim2}
\left|\partial_{y_j}\partial_{z_i}\tilde{S}_{\mathcal{U}_k}\left(w_{\bar{h}}(Q_{k+1}),w_{\bar{h}}(Q_{k+1})\right)\right|
\leq
r_0^{{-n}}C_2^{\bar{h}}(E+\varepsilon_0)\left(\omega_{1/C}^{(2k)}\left(\frac{\varepsilon_0}{E+\varepsilon_0}\right)\right)
^{\left(1/C\right)^{\bar{h}}},
\end{equation}

for any $i,j=1,\dots , n$, where $Q_{k+1}\in\Sigma_{k+1}\cap B_{\frac{r_0}{8}}(P_{k+1})$,
$\bar{h}(r)=min\{m\in\mathbb{N}\:|\:d_m\leq r\}$,
$w_{\bar{h}(r)}(Q_{k+1})=Q_{k+1}-\lambda_{\bar{h}(r)}\nu(Q_{k+1})$, $\lambda_m$ has been introduced above,
$\nu$ is the exterior unit normal to $\partial{D}_k$ and $C_1,C_2>0$
depend on the a-priori data only.

\end{proposition}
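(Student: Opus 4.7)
The plan is to adapt the chaining argument of \cite{A-V}[proof of Proposition 4.4] to the piecewise linear setting, propagating the smallness hypothesis \eqref{estim0} from the artificial exterior region $D_0$, through the chain $D_1,\dots,D_k$ of assumption 4(d), to a neighbourhood of $\Sigma_{k+1}$. The underlying analytical facts are that $\tilde{S}_{\mathcal{U}_k}(\cdot,z)$ and $\tilde{S}_{\mathcal{U}_k}(y,\cdot)$ are $H^1_{loc}$ weak solutions on $\mathcal{W}_k$ of $\textnormal{div}(\tilde\gamma^{(1)}\nabla\cdot)=0$ and $\textnormal{div}(\tilde\gamma^{(2)}\nabla\cdot)=0$ respectively, and that, by \eqref{standardbeh} and Proposition \ref{proposizione green function}, a global a-priori bound of order $r_0^{2-n}E$ off the diagonal is available, with $E=\|\gamma^{(1)}-\gamma^{(2)}\|_{L^\infty(\Omega)}$.

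First, with $z\in(D_0)_{r_0/4}$ frozen, I propagate smallness in $y$ along the chain. Inside each subdomain $D_j$, where $\tilde\gamma^{(1)}$ is linear and hence Lipschitz, the classical interior three-spheres inequality for $H^1$ weak solutions of divergence form equations provides a logarithmic step bound. To cross a flat interface $\Sigma_j$, I invoke the interface version of three-spheres used in \cite{A-V}: after a rigid motion placing $\Sigma_j\subset\{x_n=0\}$ inside $Q_{r_0/3}$, the transmission conditions $[\tilde S]=0$ and $[\tilde\gamma^{(1)}\partial_n \tilde S]=0$ permit a reflection/doubling that reduces matters to a single divergence form elliptic operator on a full cylinder, to which the Carleman/frequency-function machinery of \cite{A-V} applies. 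Iterating these two step estimates along $D_0\to D_1\to\cdots\to D_k$ yields $k$ compositions with the modulus $\omega_{1/C}$ and a bound
\[
|\tilde S_{\mathcal U_k}(x_0,z)|\leq r_0^{2-n}(E+\varepsilon_0)\,\omega_{1/C}^{(k)}\!\left(\tfrac{\varepsilon_0}{E+\varepsilon_0}\right)
\]
at a fixed interior point $x_0\in D_k$. Running the same propagation in $z$ with $y=x_0$ frozen adds $k$ further compositions of $\omega_{1/C}$, producing the doubly composed modulus $\omega_{1/C}^{(2k)}$ appearing in \eqref{estim1}. Finally, to bring the estimate to the points $w_{\bar h}(Q_{k+1})$ approaching $\Sigma_{k+1}$ along the interior normal, I chain interior three-spheres along the geometric sequence of balls $B_{\rho_m}(w_m(Q_{k+1}))$ whose radii decay like $a^m r_0$; this clustering step is exactly what generates the prefactor $C_1^{\bar h}$ and the exponent $(1/C)^{\bar h}$ in \eqref{estim1}. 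The derivative bound \eqref{estim2} then follows from \eqref{estim1} by a Caccioppoli inequality on $B_{\rho_{\bar h}/2}(w_{\bar h}(Q_{k+1}))\subset D_k$, at the price of an extra $\rho_{\bar h}^{-2}$ factor that is absorbed into a new constant $C_2$.

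The main obstacle is making the interface three-spheres inequality quantitative with constants depending only on the a-priori data. The flat size $r_0$ of each $\Sigma_j$ and the uniform Lipschitz control of each $\tilde\gamma^{(i)}_j=a^{(i)}_j+A^{(i)}_j\cdot x$, guaranteed by \eqref{apriorigamma} and Remark \ref{remark finite dimensional space}, ensure that the reflected operator has Lipschitz coefficients with norms bounded by the a-priori data alone, so the Carleman constants of \cite{A-V} survive the transition from piecewise constant to piecewise linear coefficients with only a bookkeeping update of the dependence on the gradients $A_j^{(i)}$. Everything else reproduces the iteration scheme of \cite{A-V}[Proposition 4.4], which is precisely why the authors state that only the differences need to be highlighted.
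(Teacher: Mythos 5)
Your treatment of \eqref{estim1} follows the paper's route exactly: both you and the authors obtain it by rerunning the chain/three-spheres iteration of \cite{A-V}[proof of Proposition 4.4] across the $k$ interfaces (first in $y$, then in $z$, producing $\omega_{1/C}^{(2k)}$), followed by the geometric clustering of balls $B_{\rho_m}(w_m(Q_{k+1}))$ approaching $\Sigma_{k+1}$, which generates $C^{\bar h}$ and the exponent $(1/C)^{\bar h}$; the only update needed for piecewise linear coefficients is the one you identify, namely that the interface arguments tolerate Lipschitz (indeed linear) coefficients on each side with constants controlled by the a-priori data.

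The weak point is your derivation of \eqref{estim2}. A Caccioppoli inequality on $B_{\rho_{\bar h}/2}(w_{\bar h}(Q_{k+1}))$ gives only an $L^2$ bound on a \emph{first} gradient in \emph{one} group of variables, whereas \eqref{estim2} is a pointwise bound on the mixed second derivative $\partial_{y_j}\partial_{z_i}\tilde S_{\mathcal U_k}$ at a specific point, and the two derivatives fall in different variables governed by two different equations ($\operatorname{div}_y(\gamma^{(1)}\nabla_y\cdot)=0$ and $\operatorname{div}_z(\gamma^{(2)}\nabla_z\cdot)=0$). As stated, this step would not close. The paper's device is to regard $\tilde S_{\mathcal U_k}(y,z)$ as a function of $2n$ variables solving the single elliptic equation $\operatorname{div}_y(\gamma^{(1)}(y)\nabla_y\tilde S)+\operatorname{div}_z(\gamma^{(2)}(z)\nabla_z\tilde S)=0$ on $D_k\times D_k$, where the coefficients are linear and hence smooth, and to apply interior Schauder estimates on $B_{\rho_{\bar h}}\times B_{\rho_{\bar h}}$; this legitimately yields $\|\partial_{y_i}\partial_{z_j}\tilde S\|_{L^\infty}\le C\rho_{\bar h}^{-2}\|\tilde S\|_{L^\infty}$. (Alternatively one can apply a pointwise interior gradient estimate twice, once per variable, using that $\partial_{z_i}\tilde S(\cdot,z)$ still solves the $y$-equation.) A second, minor imprecision: $\rho_{\bar h}^{-2}\sim r^{-2}$ is not absorbed into a fixed constant $C_2$ but into the \emph{power} $C_2^{\bar h}$, via $r^{-2}\le (a/r_0)^2(1/a^2)^{\bar h(r)}$, which follows from $\bar h(r)\sim\log(r/r_0)/\log a$; this also accounts for the prefactor $r_0^{-n}$ in \eqref{estim2} versus $r_0^{-n+2}$ in \eqref{estim1}.
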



\subsection{Lipschitz stability}


\begin{proof}[Proof of Theorem \ref{teorema principale}]

Let $D_K$ be the subdomain of $\Omega$ satisfying \eqref{DK max} and let $D_1,\dots D_K$ be the chain of domains satisfying assumption $4(d)$. For any $k=1,\dots , K$ we will denote by $D_{T}f$ and $\partial_\nu f$ the $n-1$ dimensional vector of the tangential partial derivatives of a function $f$ on $\Sigma_k$ and the normal partial derivative of $f$ on $\Sigma_k$ respectively. Let us also simplify our notation by replacing $\Lambda_{\gamma^{(i)}}^{\Gamma}$ with $\Lambda_i$, for $i=1,2$. We will also denote




\[\varepsilon_0=||\Lambda_1 - \Lambda_2||_{*},\qquad \delta_l=||\tilde\gamma^{(1)}-\tilde\gamma^{(2)}||_{L^{\infty}(\mathcal{W}_l)}.\]

We start our argument by estimating $\delta_1$.  By \cite{A-V} we obtain the following estimate on the first interface $\Sigma_1$ (this can also be obtained as a straightforward consequence of \cite{A-G1}[Theorem 2.3] for $\gamma^{(1)}-\gamma^{(2)}$ continuous near $\Sigma_1$)

\begin{equation}\label{lipschitz stability gamma boundary 1}
|| \tilde\gamma^{(1)}-\tilde\gamma^{(2)}||_{L^{\infty}\left(\Sigma_1\cap B_{\frac{r_0}{4}}(P_1)\right)}\leq C(\varepsilon_0+E)\left(\omega_{1/C}^{(2)}\left(\frac{\varepsilon_0}{\varepsilon_0+E}\right)\right)^
{\frac{1}{C}},
\end{equation}

where $r_0>0$ is the constant introduced in subsection \ref{subsec notation and definitions} and $C>0$ is a constant depending on the a-priori data only. Let us denote by

\begin{equation}\label{gamma linear on D1}
\alpha_1+\beta_1\cdot x = (\tilde\gamma^{(1)}_1 -\tilde\gamma^{(2)}_1)(x),
\end{equation}

and by $\{e_j\}_{j=1,\dots, n-1}$ a family of $n-1$ orthonormal vectors starting at $P_1$, defining the hyperplane containing the flat part of $\Sigma_1$. By computing $\tilde\gamma^{(1)}_1 -\tilde\gamma^{(2)}_1$ on the points $P_1$, $P_1+\frac{r_0}{5}e_j$, $j=1,\dots, n-1$, taking their differences and applying \eqref{lipschitz stability gamma boundary 1}, we obtain

\begin{eqnarray}
|\alpha_1+\beta_1\cdot P_1| &\leq & C(\varepsilon_0+E)\left(\omega_{1/C}^{(2)}\left(\frac{\varepsilon_0}{\varepsilon_0+E}\right)\right)^
{\frac{1}{C}},\label{estimate gamma at P1}\\
|\beta_1\cdot e_j| &\leq & C(\varepsilon_0+E)\left(\omega_{1/C}^{(2)}\left(\frac{\varepsilon_0}{\varepsilon_0+E}\right)\right)^
{\frac{1}{C}},\label{estimates beta along the directions ej}
\end{eqnarray}

for $j=1,\dots, n-1$, where $C>0$ is  a constant depending on the a-priori data only. To estimate $\beta_1$ along the remaining direction $\nu$ and therefore $\alpha_1$, the normal derivative

\[||\partial_{\nu}(\gamma^{(1)}-\gamma^{(2)})||_{L^{\infty}(\Sigma_1)}\]

needs to be estimated. We recall that for every $y,z\in D_0$ we have

\begin{eqnarray}\label{Alessandrini 1}
\left<(\Lambda_1 - \Lambda_2)\tilde{G}_1(\cdot,y),\tilde{G}_2(\cdot,z)\right>
&=&\int_{\Omega}(\tilde\gamma^{(1)}-\tilde\gamma^{(2)})(\cdot)\nabla\tilde{G}_1(\cdot,y)\cdot\nabla\tilde{G}_2(\cdot,z)\nonumber\\
&=&\tilde{S}_{\mathcal{U}_{0}}(y,z),
\end{eqnarray}

and

\begin{eqnarray}\label{Alessandrini 2}
\left<(\Lambda_1 - \Lambda_2)\partial_{y_n}\tilde{G}_1(\cdot,y),\partial_{z_n}\tilde{G}_2(\cdot,z)\right>
&=&\int_{\Omega}(\tilde\gamma^{(1)}-\tilde\gamma^{(2)})(\cdot)\partial_{y_n}\nabla\tilde{G}_1(\cdot,y)\cdot\partial_{z_n}\nabla\tilde{G}_2(\cdot,z)\nonumber\\
&=&\partial_{y_n}\partial_{z_n}\tilde{S}_{\mathcal{U}_{0}}(y,z).
\end{eqnarray}




From \eqref{Alessandrini 1} we obtain

\begin{eqnarray}
|\tilde{S}_{\mathcal{U}_0}(y,z)| &\leq & \varepsilon_0
||\tilde{G}_1(\cdot,y)||_{H^{1/2}_{co}(\Sigma)}||\tilde{G}_2(\cdot,z)||_{H^{1/2}_{co}(\Sigma)}\nonumber\\
&\leq & C \varepsilon_0 r_0^{2-n},\qquad
\textnormal{for\:every}\:y,z\in(D_0)_{r_0/3},
\end{eqnarray}


where $C$ depends on $A$, $L$, $\lambda$ and $n$. Let $\rho_0=\frac{r_0}{\bar{C}}$, where $\bar{C}$ is the constant introduced in Theorem \ref{teorema stime asintotiche}, let $r\in(0,d_2)$ and denote

\[w=P_1+\sigma\nu,\qquad\textnormal{where}\:\sigma=a^{\bar{h}-1}\lambda_1,\]


then

\begin{equation}\label{S=I1+I2}
\partial_{y_n}\partial_{z_n}\tilde{S}_{\mathcal{U}_0}(w,w)=I_1(w)+I_2(w),
\end{equation}

where

\[I_1(w)=\int_{B_{\rho_0}(P_1)\cap D_1}(\gamma^{(1)}-\gamma^{(2)})(\cdot)\partial_{y_n}\nabla\tilde{G}_1(\cdot,w)\cdot
\partial_{z_n}\nabla\tilde{G}_2(\cdot,w),\]

\[I_2(w)=\int_{\Omega\setminus (B_{\rho_0}(P_1)\cap
D_1)}(\gamma^{(1)}-\gamma^{(2)})(\cdot)\partial_{y_n}
\nabla\tilde{G}_1(\cdot,w)\cdot
\partial_{z_n}\nabla\tilde{G}_2(\cdot,w)\]

and (see \cite{A-V})

\begin{equation}\label{stima I2}
|I_2(w)|\leq CE\rho_0^{-n},
\end{equation}


where $C$ depends on $\lambda$ and $n$ only.  We have

\begin{eqnarray*}
|I_1(w)| & &\geq \left|\int_{B_{\rho_0}(P_1)\cap D_1}(\partial_{\nu}(\gamma^{(1)}_1-\gamma^{(2)}_1)(P_1))(x-P_1)_n\partial_{y_n}\nabla\tilde{G}_1(\cdot,w)\cdot
\partial_{z_n}\nabla\tilde{G}_2(\cdot,w)\right|\nonumber\\
& &-\int_{B_{\rho_0}(P_1)\cap D_1}|(D_T(\gamma^{(1)}_1-\gamma^{(2)}_1)(P_1))\cdot (x-P_1)'||\partial_{y_n}\nabla\tilde{G}_1(\cdot,w)|\:|
\partial_{z_n}\nabla\tilde{G}_2(\cdot,w)|\nonumber\\
& &-\int_{B_{\rho_0}(P_1)\cap D_1}|(\gamma^{(1)}_1-\gamma^{(2)}_1)(P_1)||\partial_{y_n}\nabla\tilde{G}_1(\cdot,w)|\:|
\partial_{z_n}\nabla\tilde{G}_2(\cdot,w)|.
\end{eqnarray*}

and, by Theorem \ref{teorema stime asintotiche}, this leads to

\begin{eqnarray}\label{stima S}
|I_1(w)|
& &\geq
|\partial_{\nu}(\gamma^{(1)}_1-\gamma^{(2)}_1)|C_1\int_{B_{\rho_0}(P_1)\cap
D_1}|\partial_{y_n}\nabla_x\Gamma(x,w)|^2 |x_n|\nonumber\\
& &- C_2E\int_{B_{\rho_0}(P_1)\cap
D_1}|\partial_{y_n}\nabla_x\Gamma(x,w)|\frac{|x-w|^{-n+\beta}}{\rho_0^{\beta}}|x_n|\nonumber\\
& &-C_3E\int_{B_{\rho_0}(P_k)\cap
D_1}\frac{|x-w|^{-2n+\beta}}{\rho_0^{2\beta}}|x_n|\nonumber\\
& &-\int_{B_{\rho_0}(P_1)\cap D_1}|D_T(\gamma^{(1)}_1-\gamma^{(2)}_1)|\:|(x-P_1)'|\:|\partial_{y_n}\nabla\tilde{G}_1(\cdot,w)|\:|
\partial_{z_n}\nabla\tilde{G}_2(\cdot,w)|\nonumber\\
& & -\int_{B_{\rho_0}(P_1)\cap D_1}|(\gamma^{(1)}_1-\gamma^{(2)}_1)(P_1)|\:|\partial_{y_n}\nabla\tilde{G}_1(\cdot,w)|\:|
\partial_{z_n}\nabla\tilde{G}_2(\cdot,w)|,
\end{eqnarray}

where $C_1,C_2,C_3$ are constants that depends on
$M,\lambda$ and $n$ only. Therefore, by combining
\eqref{stima S} together with \eqref{S=I1+I2} and \eqref{stima
I2}, we obtain

\begin{eqnarray*}
|I_1(w)| &\geq &
|\partial_{\nu}(\gamma^{(1)}_1-\gamma^{(2)}_1)|\tilde{C}_1\int_{B_{\rho_0}(P_1)\cap
D_1}|x-w|^{1-2n}\noindent\\
&-&\frac{\tilde{C}_2E}{{\rho_0^{\beta}}}\int_{B_{\rho_0}(P_1)\cap
D_1}|x-w|^{1-2n+\beta}\noindent\\
&-&\frac{\tilde{C}_3E}{{\rho_0^{2\beta}}}\int_{B_{\rho_0}(P_1)\cap
D_1}|x-w|^{2-2n+\beta}\nonumber\\
&-&\varepsilon_0 C_4\int_{B_{\rho_0}(P_1)\cap D_1}\:|x-w|^{1-2n}\nonumber\\
&-&\varepsilon_0 C_5\int_{B_{\rho_0}(P_1)\cap D_1}\:|x-w|^{-2n},
\end{eqnarray*}


which leads to

\begin{eqnarray}
|\partial_{\nu}(\gamma^{(1)}_1-\gamma^{(2)}_1)|\sigma^{1-n}\le |I_1(w)| + C_4\varepsilon_0 \sigma^{-n} + \tilde{C}_3 \frac{\sigma^{1-n+\beta}}{\rho_0^{\beta}},
\end{eqnarray}

where

\begin{equation}\label{I1}
|I_1(w)|\le |\partial_{y_n}\partial_{z_n}\tilde{S}_{\mathcal{U}_{0}}(w,w)| + C E \rho_0^{-n}.
\end{equation}

Thus by combining the last two inequalities we get

\begin{eqnarray}
|\partial_{\nu}(\gamma^{(1)}_1-\gamma^{(2)}_1)|\sigma^{1-n} &\le & |\partial_{y_n}\partial_{z_n}\tilde{S}_{\mathcal{U}_{0}}(w,w)|+CE\frac{\sigma^{-n+\beta}}{\rho_0^{\beta}}\nonumber\\
&+& C_4\varepsilon_0\sigma^{-n}+\tilde{C}_3\frac{\sigma^{1-n+\beta}}{\rho_0^{\beta}}
\end{eqnarray}

and by recalling that by Proposition \ref{proposizione unique continuation finale} we have

\[
\left|\partial_{y_j}\partial_{z_i}\tilde{S}_{\mathcal{U}_0}(w,w)\right|
\leq
r_0^{{-n}}C^{\bar{h}(r)}(E+\varepsilon_0)
\left(\frac{\varepsilon_0}{E+\varepsilon_0}\right)^{\left(1/C\right)^{\bar{h(r)}}},\]

we obtain

\begin{equation}
|\partial_{\nu}(\gamma^{(1)}_1-\gamma^{(2)}_1)|\le  \sigma^{-1}\left(C^{\bar{h}(r)}(E+\varepsilon_0)
\left(\frac{\varepsilon_0}{E+\varepsilon_0}\right)^{\left(1/C\right)^{\bar{h(r)}}}+C_4\varepsilon_0+CE\frac{\sigma^{\beta}}{\rho_0^{\beta}}\right)+\tilde{C}_3\frac{\sigma^{\beta}}{\rho_0^{\beta}}
\end{equation}

We need to estimate $C^{\bar{h}}$ and $\Big(\frac{1}{C}\Big)^{\bar{h}}$ in terms of $r$, where $C>1$. It turns out that

\begin{eqnarray}
C^{\bar{h}} &\leq & C^2\Big(\frac{d_1}{r}\Big)^{-\frac{1}{\log_c a}}\nonumber\\
\big(\frac{1}{C}\Big)^{\bar{h}} &\leq & \Big(\frac{1}{C}\Big)^2\Big(\frac{r}{d_1}\Big)^{-\frac{1}{\log_c a}},
\end{eqnarray}

therefore  for any $r\in(0,d_2)$

\begin{equation}\label{461}
|\partial_{\nu}(\gamma^{(1)}_1-\gamma^{(2)}_1)|\leq \left(\frac{r}{d_1}\right)C(E+\varepsilon_0)\left(\left(\frac{d_1}{r}\right)^{C}
\left(\frac{\varepsilon_0}{E+\varepsilon_0}\right)
^{\left(\frac{r}{d_1}\right)^C}+\left(\frac{r}{d_1}\right)^{\beta}\right) + \left(\frac{r}{d_1}\right)\varepsilon_0,
\end{equation}

which leads to

\begin{equation}\label{gamma on D1}
|\partial_{\nu}(\gamma^{(1)}_1-\gamma^{(2)}_1)| \leq C(\varepsilon_0+E)\left(\omega_{1/C}^{(2)}\left(\frac{\varepsilon_0}{\varepsilon_0+E}\right)\right)^{\frac{1}{C}}
\end{equation}

and thus

\begin{equation}\label{beta1 normal on D1}
|\beta_1\cdot\nu| \leq C(\varepsilon_0+E)\left(\omega_{1/C}^{(2)}\left(\frac{\varepsilon_0}{\varepsilon_0+E}\right)\right)^{\frac{1}{C}}.
\end{equation}

By combining \eqref{beta1 normal on D1} together with \eqref{gamma linear on D1}, \eqref{estimate gamma at P1}
and \eqref{estimates beta along the directions ej}, we get

\begin{equation}\label{alpha 1, beta 1}
|\alpha_1|,\:|\beta_1| \leq  C(\varepsilon_0+E)\left(\omega_{1/C}^{(2)}\left(\frac{\varepsilon_0}{\varepsilon_0+E}\right)\right)^{\frac{1}{C}}
\end{equation}

and by \eqref{alpha 1, beta 1}

\begin{equation}
\delta_1 \leq C(\varepsilon_0+E)\left(\omega_{1/C}^{(2)}\left(\frac{\varepsilon_0}{\varepsilon_0+E}\right)\right)^
{\frac{1}{C}},
\end{equation}

where $C>0$ is a constant that depends on the a-priori data only. By proceeding by induction on $l$ in order to estimate $\gamma^{(1)}_l - \gamma^{(2)}_l$ for $l=1,\dots , K$, we replace \eqref{Alessandrini 1} and \eqref{Alessandrini 2} by

\begin{eqnarray}\label{Alessandrini 1 general}
& &\left<(\Lambda_1 - \Lambda_2)\tilde{G}_1(\cdot,y),\tilde{G}_2(\cdot,z)\right>\nonumber\\
& &=\tilde{S}_{\mathcal{U}_{l-1}}(y,z)+\int_{\mathcal{W}_{l-1}}(\tilde\gamma^{(1)}-\tilde\gamma^{(2)})(\cdot)\nabla\tilde{G}_1(\cdot,y)\cdot\nabla\tilde{G}_2(\cdot,z)
\end{eqnarray}

and

\begin{eqnarray}\label{Alessandrini 2 general}
& &\left<(\Lambda_1 - \Lambda_2)\partial_{y_n}\tilde{G}_1(\cdot,y),\partial_{z_n}\tilde{G}_2(\cdot,z)\right>\nonumber\\
& &=\partial_{y_n}\partial_{z_n}\tilde{S}_{\mathcal{U}_{l-1}}(y,z)+\int_{\mathcal{W}_{l-1}}(\tilde\gamma^{(1)}-\tilde\gamma^{(2)})(\cdot)\partial_{y_n}\nabla\tilde{G}_1(\cdot,y)\cdot\partial_{z_n}\nabla\tilde{G}_2(\cdot,z)
\end{eqnarray}




respectively. By noticing that \eqref{Alessandrini 1 general} leads to (see \cite{A-V})

\begin{eqnarray}
|\tilde{S}_{\mathcal{U}_{l1}}(y,z)| &\leq & \varepsilon_0
||\tilde{G}_1(\cdot,y)||_{H^{1/2}_{co}(\Sigma)}||\tilde{G}_2(\cdot,z)||_{H^{1/2}_{co}(\Sigma)}\nonumber\\
&\leq & C (\varepsilon_0+\delta_{l-1}) r_0^{2-n},\qquad
\textnormal{for\:every}\:y,z\in(D_0)_{r_0/3},
\end{eqnarray}


where $C$ depends on $A$, $L$, $\lambda$, $n$ and by repeating the same argument applied for the special case $l=1$ and observing that

\[\delta_l\leq \delta_{l-1}+||\gamma^{(1)}_l - \gamma^{(2)}_l  ||_{L^{\infty}(D_l)},\]

we obtain (see \cite{A-V})

\[\delta_{l}\leq \delta_{l-1}+C(\varepsilon_0+\delta_{l-1}+E)\left(\omega_{1/C}^{(2(l+1))}\left(\frac{\varepsilon_0+\delta_{l-1}}{\varepsilon_0+\delta_{l-1}+E}\right)\right)^
{\frac{1}{C}},
\]

in particular for $l=K$

\[\delta_{K}\leq \delta_{K-1}+C(\varepsilon_0+\delta_{K-1}+E)\left(\omega_{1/C}^{(2(K+1))}\left(\frac{\varepsilon_0+\delta_{K-1}}{\varepsilon_0+\delta_{K-1}+E}\right)\right)^
{\frac{1}{C}},
\]

which leads to

\begin{equation*}
||\gamma^{(1)}-\gamma^{(2)}||_{L^{\infty}(\Omega)}\leq C(\varepsilon + E)\left(\omega_{\frac{1}{C}}^{(K^2)}\left(\frac{\varepsilon_0}{\varepsilon_0 + E}\right)\right)^{\frac{1}{C}},
\end{equation*}

therefore

\begin{equation}\label{463}
E\leq C(\varepsilon_0 +E)\left(\omega_{\frac{1}{C}}^{(K^2)}\left(\frac{\varepsilon_0}{\varepsilon_0 + E}\right)\right)^{\frac{1}{C}}.
\end{equation}

Assuming that $E>\varepsilon_0 e^2$ (if this is not the case then the theorem is proven) we obtain

\[E\leq C \left(\frac{E}{e^2}+E\right)\left(\omega_{\frac{1}{C}}^{(K^2)}\left(\frac{\varepsilon_0}{E}\right)\right)^{\frac{1}{C}},\]

which leads to

\[\frac{1}{C}\leq \omega_{\frac{1}{C}}^{(K^2)}\left(\frac{\varepsilon_0}{E}\right)\]

therefore

\[E\leq \frac{1}{\omega_{\frac{1}{C}}^{(-K^2)}\left(\frac{1}{C}\right)}\:\varepsilon_0,\]

where here, with a slight abuse of notation, $\omega_{\frac{1}{C}}^{(-K^2)}$ denotes the inverse function of $\omega_{\frac{1}{C}}^{(K^2)}$.

\end{proof}




\section{Proofs of Theorems \ref{teorema stime asintotiche}, \ref{proposizione unique continuation finale} }\label{PP}

\subsection{Asymptotic estimates}

\begin{theorem}\label{rego}
Let $r>0$ be a fixed number.  Let $U\in H^1(Q_r)$ be a solution to
\begin{eqnarray}
div (b(x)\nabla U)=0 \ ,
\end{eqnarray}
where
\begin{displaymath}
b(x)=\left\{ \begin{array}{ll}b^+ + B^+\cdot x,
&\quad
x\in Q^+_{r},\\
b^- + B^-\cdot x, &\quad x\in Q^-_{r} \ ,
\end{array} \right.
\end{displaymath}
where $b^+,b^-\in \mathbb{R}, B^+, B^-\in \mathbb{R}^n$ and $0<\bar{b}^{-1}\le b(x)\le \bar{b}$.
Then, there exist
positive constants $0<\alpha'\le 1, C>0$ depending on $\bar{b},r$ and $n$ only, such that for any $\rho\le \frac{r}{2}$ and
for any $x\in Q_{r-2\rho}$, the following estimate holds
\begin{eqnarray}\label{stimareg}
&&\|\nabla U\|_{L^{\infty}(Q_{\rho}(x))}+ {\rho}^{{\alpha}'}|\nabla U|_{\alpha',Q_{\rho}(x)\cap Q^+_{r} } + {\rho}^{{\alpha}'}|\nabla U|_{\alpha',Q_{\rho}(x)\cap Q^-_{r} } \nonumber \\
&&\le \frac{C}{\rho^{1+n/2}}\|U\|_{L^2(Q_{2\rho}(x))}\ ,
\end{eqnarray}

\end{theorem}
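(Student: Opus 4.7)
The key observation is that the equation $\mathrm{div}(b(x)\nabla U)=0$ is a transmission problem across the flat hyperplane $\Pi=\{x_n=0\}$: the coefficient $b$ is smooth (indeed affine) on each side $Q_r^+$ and $Q_r^-$, but jumps across $\Pi$. Therefore $U\in H^1(Q_r)$ admits one-sided traces of $U$ and of the co-normal flux $b\,\partial_n U$ on $\Pi$, and the weak formulation is equivalent to saying that $U$ solves the equation classically on each side together with the transmission conditions $U^+=U^-$ and $b^+\partial_n U^+=b^-\partial_n U^-$ on $\Pi\cap Q_r$.

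My plan is to split the argument into two regimes. First, when the small cube $Q_\rho(x)$ does not meet the interface, $U$ solves a uniformly elliptic equation with smooth (affine, hence Lipschitz) coefficients in $Q_{2\rho}(x)$; in that case classical interior Schauder estimates give
\[
\|\nabla U\|_{L^\infty(Q_\rho(x))}+\rho^{\alpha'}|\nabla U|_{\alpha',Q_\rho(x)}\le C\rho^{-1}\|U\|_{L^\infty(Q_{3\rho/2}(x))},
\]
and the right-hand side is bounded by $C\rho^{-1-n/2}\|U\|_{L^2(Q_{2\rho}(x))}$ via the De~Giorgi--Moser $L^2\!\to\!L^\infty$ bound applied to $U$ (which is a local sub- and super-solution of a divergence-form equation with bounded measurable coefficients). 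Second, when $Q_\rho(x)$ meets $\Pi$, I would invoke the known piecewise $C^{1,\alpha'}$ regularity for transmission problems across a $C^{1,\alpha}$ interface with H\"older coefficients on each side (in the spirit of the results of Li--Vogelius and Li--Nirenberg), which yields an analogous estimate
\[
\|\nabla U\|_{L^\infty(Q_\rho(x))}+\rho^{\alpha'}\bigl(|\nabla U|_{\alpha',Q_\rho(x)\cap Q_r^+}+|\nabla U|_{\alpha',Q_\rho(x)\cap Q_r^-}\bigr)\le C\rho^{-1-n/2}\|U\|_{L^2(Q_{2\rho}(x))},
\]
the point being that $b$ is piecewise smooth with flat interface, so no Hölder seminorm across $\Pi$ is claimed. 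Combining the two regimes gives \eqref{stimareg}.

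To implement both estimates uniformly in $\rho$ I would rescale: set $\widetilde U(y)=U(x+\rho y)$ on $Q_2\subset\mathbb{R}^n$, which satisfies $\mathrm{div}(\widetilde b(y)\nabla\widetilde U)=0$ with $\widetilde b(y)=b(x+\rho y)$ still affine on each side of the rescaled interface and with ellipticity constants unchanged; applying the unit-scale estimates (interior Schauder on one hand, transmission regularity on the other) to $\widetilde U$ on $Q_1$ and pulling back restores the factor $\rho^{-1-n/2}$ on the right and $\rho^{\alpha'}$ on the Hölder seminorm on the left. The dependence $C=C(\bar b,r,n)$ and the exponent $\alpha'=\alpha'(\bar b,n)$ track through this rescaling because the only data entering the unit-scale theorems are the ellipticity bound $\bar b$ and the dimension.

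The main difficulty, and the step that forces us to go beyond classical Schauder, is the transmission estimate near $\Pi$: the coefficient $b$ is merely $L^\infty$ as a function on $Q_r$, and the usual De~Giorgi theory only yields $C^{0,\alpha}$ for $U$, not bounded gradient. The gain comes from the fact that the discontinuity locus is a smooth (flat) hypersurface and $b$ is smooth on each side; under these conditions piecewise $C^{1,\alpha'}$ regularity is known, and this is the substantive input I would cite rather than prove from scratch.
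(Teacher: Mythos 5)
Your proposal is correct and takes essentially the same route as the paper: the paper's entire proof of this theorem is a citation of the known piecewise $C^{1,\alpha'}$ regularity for elliptic equations with coefficients that are H\"older (here affine) on each side of a smooth discontinuity interface (Ladyzhenskaya--Ural'tseva, Theorem 16.2, Chap.~3, with Li--Vogelius and Li--Nirenberg as more recent references), which is exactly the substantive input you identify, and your rescaling plus De~Giorgi--Moser step supplies the explicit $\rho$-dependence that the paper leaves implicit. One small slip: if $Q_\rho(x)$ misses the interface, the larger cube $Q_{3\rho/2}(x)$ may still cross it, so your case split should be made on whether $Q_{2\rho}(x)$ meets $\{x_n=0\}$ (or, more simply, apply the transmission estimate in all cases, since it subsumes the interior Schauder one).
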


\begin{proof}
For the proof we refer to \cite[Theorem 16.2, Chap.3]{Lad-Ur}, where the
authors  obtained piecewise $C^{1,\alpha'}$
estimates for solutions to linear second order elliptic equations with
piecewise H\"{o}lder continuous coefficients and $C^{1,1}$ discontinuity interfaces (see also \cite{Li-Vo}
\cite{Li-Ni} for more recent results under weaker regularity hypothesis) .
\end{proof}

\begin{proof}[Proof of Theorem \ref{teorema stime asintotiche}]

We fix $l\in \{1, \dots, N-1 \}$. With no loss of generality we may assume that $Q_{l+1}=0$, $a_{l}=1$ and $a_{l+1}>0 $. We will denote $a_{l}=a^{-}$ and  $a_{l+1}=a^{+}$.

For any $x=(x',x_n)$ we denote $x^*=(x',-x_n)$ and we have that a fundamental solution of the operator $\mbox{div}_x(1+(a^+-1)\chi^+\nabla_x)$ has the following explicit form

\begin{equation}\label{fund}
H(x,y)= \left\{
\begin{array}
{lcl} \frac{1}{a^+}\Gamma(x,y) + \frac{a^+-1}{a^+(a^++1)}\Gamma(x,y^*)\ , &&\mbox{if}\ x_n,y_n>0\\
\frac{2}{a^++1}\Gamma(x,y) \ , &&\mbox{if}\ x_n y_n<0\ \\
\Gamma(x,y) + \frac{1-a^+}{a^++1}\Gamma(x,y^*)\ ,
&&\mbox{if}\ x_n,y_n<0.
\end{array}
\right.
\end{equation}

We then define
\begin{eqnarray}\label{diff}
R(x,y)=\tilde{ G}(x,y) - H(x,y)= \tilde{G}(x,y) - \frac{2}{a^++1}\Gamma(x,y) \ .
\end{eqnarray}


We observe that $R$ in \eqref{diff} satisfies
\begin{equation}\label{eqdiff}
\left\{
\begin{array}
{lcl} \mbox{div}_{x}(\gamma(\cdot)\nabla_{x}
R(\cdot,y))=-\mbox{div}_{x}((\gamma(\cdot)-\gamma_0(\cdot))\nabla_{x}
H(\cdot,y))\ ,& \mbox{in $\tilde{\Omega}$ ,}
\\
 R(\cdot,y)= -H(\cdot,y)\ ,   & \mbox{on $\partial\tilde{\Omega}$ .}
\end{array}
\right.
\end{equation}

By the representation formula over $\tilde{\Omega}$ we have that $R$ in
\eqref{diff} satisfies
\begin{eqnarray}
&&\  R(x,y)=\ - \int_{\tilde{\Omega}}(\gamma(\zeta)-\gamma_0(\zeta))\nabla_{\zeta}H(\zeta,y)\cdot\nabla_{\zeta}{\tilde{G}}(\zeta,x)d\zeta\ \\
&& + \int_{\partial \tilde{\Omega}}\gamma(\zeta)\partial_{\nu}\tilde{G}(\zeta,x)H(\zeta,y)d\sigma(\zeta)
.
\end{eqnarray}

We first treat the  boundary term on the right hand side of the above equation. We have that

\begin{eqnarray}
&&\left|\int_{\tilde{\Omega}}\gamma(\cdot)\partial_{\nu}\:\tilde{G}(\cdot,x)\:H(\cdot,e_ny_n)\:d\sigma \right|\\
&& \le\bar{\gamma}\:\| \partial_{\nu}\tilde{G}(\cdot,x)\|_{H^{-\frac{1}{2}}(\partial \tilde{\Omega})}\:\| H(\cdot,e_ny_n)\|_{H^{\frac{1}{2}}(\partial \tilde{\Omega})}\\
&&\le\bar{\gamma}\:\|\tilde{G}(\cdot,x)\|_{H^{1}(\tilde{\Omega}\setminus B_{r_0/2}(x))}\:\|H(\cdot,e_ny_n)\|_{H^{1}(\tilde{\Omega}\setminus B_{r_0/2}(e_ny_n))} \ .
\end{eqnarray}

Hence, we deduce that

\begin{eqnarray}\label{1R}
\left|\int_{\tilde{\Omega}}\gamma(\cdot)\partial_{\nu}\:\tilde{G}(\cdot,x)\:H(\cdot,e_ny_n)\:d\sigma \right|\le C
\end{eqnarray}

where $C>0$ is a constant depending on the a priori data only

Being $\gamma(\zeta)-\gamma_0(\zeta)$ of Lipschitz class,  we observe that the estimate

\begin{equation}\label{2aR}
|\int_{\tilde{\Omega}}(\gamma(\zeta)-\gamma_0(\zeta))\nabla_{\zeta}H(\zeta,e_ny_n)\cdot\nabla_{\zeta}{\tilde{G}}(\zeta,x)\:d\zeta| \le {C_1}|x-e_ny_n|^{3-n}
\end{equation}

can be achieved along the lines of the proof of Claim 4.3 in \cite{A-V}.

Combining \eqref{1R} and \eqref{2aR} we get
\begin{eqnarray}\label{2R}
|R(x,e_ny_n)| \le {C_1}|x-e_ny_n|^{3-n} \ ,
\end{eqnarray}
when $x\in B^+_{{r_0}}$ and $y_n\in(-{r_0},0)$.

We now focus on the estimate for $\nabla _x R(x,e_ny_n)$. Again arguing as in \cite[Claim 4.3]{A-V}, we fix $x\in B^+_{r_0/4}$ and $y_n\in(-r_0/4, 0)$ and let us denote
\begin{eqnarray}
Q=B'_{h/4}(x')\times \left (x_n, x_n +\frac{h}{4} \right)\ .
\end{eqnarray}
where $h=|x-y|$\ . We observe that  $Q\subset Q^{+}_{\frac{r_0}{2}}$. Moreover, we have that
$Q\subset Q_{\frac{h}{2}}(x)$ and $x\in \partial Q$.

By \eqref{standardbeh}, Theorem \ref{rego} and explicit computation on the behaviour of $H(x,y)$ we get

\begin{eqnarray}\label{hold}
|\nabla_x \tilde{G}(\cdot, e_ny_n)|_{\alpha',Q}\ , \ |\nabla_x H (\cdot, e_ny_n)|_{\alpha',Q} \le C  h^{-\alpha' +1-n} \ .
\end{eqnarray}

where $C>0$ is a constant depending on the a priori data only.

Hence by \eqref{diff} and \eqref{hold} we get

\begin{eqnarray}\label{holdR}
|\nabla_x R(\cdot, e_ny_n)|_{\alpha',Q}\le C  h^{-\alpha' +1-n} \ .
\end{eqnarray}

where $C>0$ is a constant depending on the a priori data only.

We recall the following interpolation inequality

\begin{equation}
\| \nabla _x R(\cdot, e_ny_n)\|_{L^{\infty}(Q)}\le C \| R(\cdot, e_ny_n)\|^{\alpha'/1+\alpha'}_{L^{\infty}(Q)}\left|\nabla_x R(\cdot, e_ny_n) \right|^{1/1+\alpha'}_{\alpha',Q} \ ,
\end{equation}

where $C>0$ is a constant depending on the a priori data only.

By the above estimate and \eqref{2R} we obtain
\begin{eqnarray}
|\nabla_x R(x,y)|\le C h ^{\beta +1-n}
\end{eqnarray}
where $\beta= \frac{\alpha'^2}{1+\alpha}$\ .

Finally, we study the behaviour of $\nabla_y\nabla_x R(x,y)$.
Let us define the cylinder $\hat{Q}= B'_{\frac{h}{8}}(0)\times \left(y_n- \frac{h}{8}, y_n \right)$. As before we have that $\hat{Q}\subset Q{^-}_{\frac{r_0}{4}},\hat{Q}\subset Q_{\frac{h}{4}}(y)$. In particular, we have that $x\notin  Q_{\frac{h}{4}}(y)$.

Let $k$ be an integer such that $k\in \{1,\dots, n\}$. We observe that $\partial_{x_k}\Gamma(x, \cdot)$ and $\partial_{x_k}G(x,\cdot)$ are solutions to
\begin{eqnarray}
&&\Delta_y (\partial_{x_k}\Gamma(x,\cdot))=0\ \ \ \mbox{in}\ \ Q_{\frac{h}{4}}(y)\ ,\\
&&\mbox{div}_y(\gamma(\cdot)\nabla_y\partial_{x_k}\tilde{G}(x,\cdot))=0\  \ \ \mbox{in}\ \ Q_{\frac{h}{4}}(y)
\end{eqnarray}
respectively.

Again by applying Theorem \ref{rego}, we have that
\begin{eqnarray}\label{1s}
|\nabla_y\partial_{x_k}\tilde{G}(x,\cdot)|_{\alpha', \hat{Q}}\le C  h^{-\alpha'-1-\frac{n}{2}}\|\partial_{x_k}\tilde{G}(x,\cdot) \|_{L^2(Q_{\frac{h}{4}}(y))}\ .
\end{eqnarray}
where $C>0$ is a constant depending on the a priori data only.

We now fix $\eta \in Q_{\frac{h}{4}}(y)$ and we notice that $\eta\notin Q_{\frac{h}{16}}(x))$. By Theorem \ref{rego} we have that
\begin{eqnarray}\label{2s}
\|\nabla_x \tilde{G}(\cdot, \eta)\|_{L^{\infty}(Q_{\frac{h}{32}}(x))}\le C h^{-1-\frac{n}{2}}\|\tilde{G}(\cdot,\eta)\|_{L^{\infty}(Q_{\frac{h}{16}}(x))}\le C h ^{1-n}
\end{eqnarray}
where $C>0$ is a constant depending on the a priori data only.

Combining \eqref{1s} and \eqref{2s} we have
\begin{eqnarray}\label{1h}
|\nabla_y\partial_{x_k}\tilde{G}(x,\cdot)|_{\alpha', \hat{Q}} \le C h^{-\alpha'-n}\ \ ,
\end{eqnarray}
where $C>0$ is a constant depending on the a priori data only.
By explicit computations we infer that
\begin{eqnarray}\label{2h}
|\nabla_y\partial_{x_k}{\Gamma}(x,\cdot)|_{\alpha', \hat{Q}} \le C h^{-\alpha'-n}\  ,
\end{eqnarray}
where $C>0$ is a constant depending on the a priori data only.
From \eqref{1h} and \eqref{2h}, we have that
\begin{eqnarray}\label{2e}
|\nabla_y\partial_{x_k}R(x,\cdot)|_{\alpha', \hat{Q}} \le C h^{-\alpha'-n}\ ,
\end{eqnarray}
where $C>0$ is a constant depending on the a priori data only.

Moreover, we observe that by analogous arguments of those discussed above, we can  infer that
\begin{eqnarray}\label{2ee}
\|\partial_{x_k}R(x,\cdot)\|_{L^{\infty}(\hat{Q})}\le C h^{\beta +1-n}
\end{eqnarray}
where $\beta= \frac{\alpha'^2}{1+\alpha}$\ .
By the following interpolation inequality

\begin{eqnarray}
\|\nabla_y\partial_{x_k}R(x,\cdot)\|_{L^{\infty}(\hat{Q})}\le C \|\partial_{x_k}R(x,\cdot) \|^{\frac{\alpha'}{\alpha'+1}}_{L^{\infty}(\hat{Q})}|\nabla_y\partial_{x_k}R(x,\cdot)|^{\frac{1}{\alpha'+1}}_{\alpha', \hat{Q}}
\end{eqnarray}
and by \eqref{2ee} and \eqref{2e} we have that
\begin{eqnarray}
|\nabla_y\partial_{x_k}R(x,y)|\le C h^{-n +\theta}
\end{eqnarray}
where $\theta=\frac{\beta\alpha'}{1+\alpha'}$\ .

\end{proof}

\subsection{Propagation of smallness}

\begin{proof}[Proof of Theorem \ref{proposizione unique continuation finale}]

By repeating the argument in \cite{A-V},[proof of Proposition 4.4] concerning a careful analysis of unique continuation argument across $K$ discontinuity interfaces and based on an iterated use of the three spheres inequality for elliptic equation, we have that for any $y,z \in B_{\rho_{\bar{h}(r)}}(w_{\bar{h}(r)}(Q_{k+1}))$

\begin{eqnarray}\label{primastima}
|{\tilde{S}}_{\mathcal{U}_k}({y},z)|\le  r_0^{{-n+2}}C^{\bar{h}(r)}(E+\varepsilon_0)\left(\omega_{1/C}^{(2k)}\left(\frac{\varepsilon_0}{E+\varepsilon_0}\right)\right)
^{\left(1/C\right)^{\bar{h}(r)}}
\end{eqnarray}

where $C>0$ is a constant depending on the a priori data only. Hence \eqref{estim1} trivially follows from \eqref{primastima}.

We now consider ${\tilde{S}}_{\mathcal{U}_k}({y},z)$ as a function of $2n$ variables where $(y,z)\in \mathbb{R}^{2n}$

Hence by \eqref{primastima} we have that

\begin{equation}\label{secstima}
|{\tilde{S}}_{\mathcal{U}_k}(y_1,\dots, y_n,z_1,\dots, z_{n})|\le r_0^{{-n+2}}C^{\bar{h}(r)}(E+\varepsilon_0)\left(\omega_{1/C}^{(2K)}\left(\frac{\varepsilon_0}{E+\varepsilon_0}\right)\right)
^{\left(1/C\right)^{\bar{h(r)}}}
\end{equation}

for any $x=(y_1,\dots, y_n,z_1,\dots, z_{n}) \in B_{\rho_{\bar{h}(r)}}(w_{\bar{h}(r)}(Q_{k+1}))\times B_{\rho_{\bar{h}(r)}}(w_{\bar{h}(r)}(Q_{k+1}))$.

Now observing that ${\tilde{S}}_{\mathcal{U}_k}(y_1,\dots, y_n,z_1,\dots, z_{n})$ is a solution in $D_k\times D_k$ of the elliptic equation

\begin{eqnarray}
{\mbox{div}}_y (\gamma^1(y){\nabla}_y {\tilde{S}}_{\mathcal{U}_k}(y,z)) + {\mbox{div}}_z (\gamma^2(z){\nabla}_z {\tilde{S}}_{\mathcal{U}_k}(y,z)) = 0
\end{eqnarray}

we have that by Schauder interior estimates that for any $i,j=1,\dots, n$ it follows

\begin{eqnarray*}
& &\|\partial_{y_i}\partial_{z_j}{\tilde{S}}_{\mathcal{U}_k}(y_1,\dots, y_n,z_1,\dots, z_{n})\|_{L^{\infty}(B_{\frac{\rho_{\bar{h}(r)}}{2}}(w_{\bar{h}(r)}(Q_{k+1}))\times B_{\frac{\rho_{\bar{h}(r)}}{2}}(w_{\bar{h}(r)}(Q_{k+1})))}\nonumber\\
& &\le\frac{C}{{\rho^2_{\bar{h}(r)-1}}}\|{\tilde{S}}_{\mathcal{U}_k }(y_1,\dots, y_n,z_1,\dots, z_{n})\|_{L^{\infty}( B_{\rho_{\bar{h}(r)}}(w_{\bar{h}(r)}(Q_{k+1}))\times B_{\rho_{\bar{h}(r)}}(w_{\bar{h}(r)}(Q_{k+1})))}
\end{eqnarray*}

Moreover, we have that being $d_{\bar{h}(r)-1}>r$,  hence it follows $r<\frac{d_0}{a\rho_0}\rho_{\bar{h}(r)}$, which in turn leads to
\begin{eqnarray}\label{Sh}
&&\|\partial_{x_i}\partial_{x_j}{\tilde{S}}_{\mathcal{U_K}}(x_1,\dots, x_{2n})\|_{L^{\infty}(\tilde{Q}_{\frac{{\rho_{\bar{h}(r)}}}{2}}(w_{\bar{h}(r)}(Q_{k+1})))} \nonumber\\
&&\le\frac{C}{r^2}\|{\tilde{S}}_{\mathcal{U_K}}(x_1,\dots, x_{2n})\|_{L^{\infty}(\tilde{Q}_{{{\rho_{\bar{h}(r)}}}}(w_{\bar{h}(r)}(Q_{k+1})))}
\end{eqnarray}

where $C>0$ is a constant depending on the a priori data only.
Noticing that
\begin{eqnarray}\label{Sh2}
\frac{\log({r/r_0})}{\log(a)}\le \bar{h}(r)\le \frac{\log({r/r_0})}{\log(a)} +1
\end{eqnarray}
we find that
\begin{eqnarray}
r^{-2}\le \left(\frac{a}{r_0}\right)^2\left(\frac{1}{a^2}\right)^{\bar{h}(r)}\ .
\end{eqnarray}

 Finally by combining \eqref{secstima}, \eqref{Sh} and the above inequality we get the desired estimate.

\end{proof}

\section*{\normalsize{Acknowledgements}}
M. V. de Hoop and R. Gaburro would like to acknowledge the support of the Isaac Newton Institute for Mathematical Sciences, Cambridge, where the research of this paper was initiated during a special semester on Inverse Problems in the Fall of 2011. The research carried out by G.Alessandrini and E.Sincich for the preparation of this paper has been supported by the grant FRA2014 ``Problemi inversi per PDE, unicit\`{a}, stabilit\`{a}, algoritmi" funded by Universit\`{a} degli Studi di Trieste.


\end{document}